\documentclass[11pt,a4paper]{amsart}
\usepackage[T1]{fontenc}
\usepackage{lmodern}
\usepackage{microtype}
\usepackage{mathtools,amssymb,mathrsfs}
\usepackage[margin=27mm]{geometry}
\usepackage{enumitem}
\usepackage[colorlinks=true,linkcolor=blue,citecolor=blue,urlcolor=blue,bookmarksnumbered]{hyperref}
\usepackage{bookmark}
\hypersetup{pdftitle={Periodic transport and Oka complements in Cn},pdfauthor={Yun-Heng Du, Bin Guo, Peng-Chao Wang and Song-Yan Xie},pdfsubject={Periodic obstacles, convex tubes, and products of planar compact sets},pdfkeywords={Oka manifold, periodic transport, divergence-free vector field, Fatou-Bieberbach domain, convex tube, Cartesian product}}
\numberwithin{equation}{section}
\newtheorem{theorem}{Theorem}[section]
\newtheorem{proposition}[theorem]{Proposition}
\newtheorem{lemma}[theorem]{Lemma}
\newtheorem{corollary}[theorem]{Corollary}
\theoremstyle{definition}

\theoremstyle{remark}

\newcommand{\C}{\mathbb C}
\newcommand{\R}{\mathbb R}
\newcommand{\Z}{\mathbb Z}

\newcommand{\OO}{\mathscr O}
\newcommand{\Aut}{\operatorname{Aut}}

\newcommand{\id}{\operatorname{id}}
\newcommand{\Int}{\operatorname{int}}
\newcommand{\im}{\operatorname{Im}}
\newcommand{\re}{\operatorname{Re}}

\newcommand{\dist}{\operatorname{dist}}
\newcommand{\Ball}{\mathbb B}
\newcommand{\ii}{\mathrm i}
\newcommand{\dd}{\mathrm d}
\newcommand{\norm}[1]{\left\lVert #1\right\rVert}

\newcommand{\doi}[1]{\href{https://doi.org/#1}{\nolinkurl{#1}}}
\newcommand{\arxiv}[1]{\href{https://arxiv.org/abs/#1}{arXiv:\nolinkurl{#1}}}
\title[Periodic transport and Oka complements]{Periodic transport and Oka complements in \(\C^n\)}
\subjclass[2020]{32Q56, 32E30, 32M17, 37F99}
\keywords{Oka manifold, periodic transport, divergence-free vector field, Fatou--Bieberbach domain, convex tube, Cartesian product}
\author[Du]{Yun-Heng Du}
\address[Yun-Heng Du]{Academy of Mathematics and Systems Science, Chinese Academy of Sciences, Beijing 100190, China}
\email{duyunheng@amss.ac.cn}

\author[Guo]{Bin Guo}
\address[Bin Guo]{Academy of Mathematics and Systems Science, Chinese Academy of Sciences, Beijing 100190, China}
\email{guobin181@mails.ucas.ac.cn}

\author[Wang]{Peng-Chao Wang}
\address[Peng-Chao Wang]{Academy of Mathematics and Systems Science, Chinese Academy of Sciences, Beijing 100190, China}
\email{blowa8@gmail.com}

\author[Xie]{Song-Yan Xie}
\address[Song-Yan Xie]{State Key Laboratory of Mathematical Sciences, Academy of Mathematics and Systems Science, Chinese Academy of Sciences, Beijing 100190, China; School of Mathematical Sciences, University of Chinese Academy of Sciences, Beijing 100049, China}
\email{xiesongyan@amss.ac.cn}

\begin{document}
\begin{abstract}
For every \(n\geq2\), we prove that the complement in \(\C^n\) of a periodic closed set with compact holomorphically convex quotient in \((\C^*)^n\) is Oka. The proof combines periodic divergence-free transport with Fatou--Bieberbach basins. Applications include complements of convex tubes \(\R^n+\ii B\), products of closed annuli, and, more generally, products of planar compact sets whose complements have at most one bounded component. In particular, \(\C^2\setminus\R^2\) is Oka.
\end{abstract}
\maketitle

\section{Introduction}\label{sec:intro}

A complex manifold \(Y\) is called \emph{Oka} if it satisfies the following \emph{convex approximation property}: for every integer \(m\geq1\), every compact convex set \(K\subset\C^m\), every open neighborhood \(U\subset\C^m\) of \(K\), every holomorphic map \(f:U\to Y\), and every \(\epsilon>0\), there exists a holomorphic map \(g:\C^m\to Y\) such that \(d_Y(f(z),g(z))<\epsilon\) for all \(z\in K\), where \(d_Y\) is a fixed distance inducing the topology of \(Y\). This property is independent of the choice of \(d_Y\) and is equivalent to the parametric Oka property with approximation and interpolation for maps from Stein manifolds to \(Y\); see \cite[Theorem~0.1]{For06}, \cite[Theorem~5.4.4]{For17}, and \cite[Theorem~1.2 and Section~3]{For23}.

Earlier work on maps into \(\C^2\setminus\R^2\) includes the Mergelyan theorem of Winkelmann \cite{Win98}. Forstneri\v c and Wold asked whether \(\C^n\setminus\R^k\) is Oka for \(n>1\) and \(1\leq k\leq n\) \cite[Problem~1.5]{FW15}. Kusakabe's complement theorem \cite[Theorem~1.6 and Corollary~1.7]{Kus24} proves these cases except for \((n,k)=(2,1),(2,2),(3,3)\). Forstneri\v c and Wold proved that \(\C^2\setminus\R\) is Oka \cite[Proposition~4.9]{FW24}, and Du proved that \(\C^3\setminus S\) is Oka for every closed subset \(S\subset\R^3\) \cite[Theorem~1.3]{Du26}. We prove that \(\C^2\setminus\R^2\) is Oka, settling the remaining case for totally real affine subspaces. This follows from a general construction of holomorphic families of attracting basins in periodic complements, with applications to convex tubes. For \(n\geq3\), the full-plane conclusions give uniform alternative proofs of cases already covered by Kusakabe and Du, whose corresponding results also apply to arbitrary closed subsets of the real planes.

To formulate the general result, fix \(n\geq2\) and consider the covering
\begin{equation}\label{eq:covering}
q:\C^n\longrightarrow X\coloneqq(\C^*)^n,\qquad
q(z)=(e^{\ii z_1},\ldots,e^{\ii z_n}),
\end{equation}
with deck group \(\Gamma=2\pi\Z^n\). A closed set of the form \(E=q^{-1}(C)\), with \(C\subset X\) compact, is unbounded but compact modulo \(\Gamma\). Our principal result concerns such sets when \(C\) is holomorphically convex. All vector norms other than \(\norm{\cdot}_\infty\) are Euclidean, and matrix norms are the induced operator norms.

\begin{theorem}\label{thm:periodic-complement}
For \(n\geq2\), let \(C\subset X=(\C^*)^n\) be a nonempty compact \(\OO(X)\)-convex set, and set \(E=q^{-1}(C)\) and \(Y=\C^n\setminus E\).

Let \(m\geq1\), \(K\subset\C^m\) be compact and convex, \(U_0\subset\C^m\) be an open neighborhood of \(K\), and \(f:U_0\to Y\) be holomorphic. There exist an open neighborhood \(V\subset U_0\) of \(K\) and a holomorphic map
\begin{equation}\label{eq:general-family}
F:V\times\C^n\longrightarrow Y
\end{equation}
such that every \(F_\zeta=F(\zeta,\cdot)\) is biholomorphic onto its image and
\begin{equation}\label{eq:general-normalization}
F(\zeta,0)=f(\zeta),\qquad D_\xi F(\zeta,0)=I_n.
\end{equation}
The images \(\Omega_\zeta=F_\zeta(\C^n)\) are attracting basins of a holomorphic family of automorphisms of \(\C^n\), and
\[
(\zeta,\xi)\longmapsto(\zeta,F(\zeta,\xi))
\]
is a biholomorphism from \(V\times\C^n\) onto the open set \(\{(\zeta,z):\zeta\in V,\ z\in\Omega_\zeta\}\). In particular, \(Y\) is Oka.
\end{theorem}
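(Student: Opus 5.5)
We will build, for parameters \(\zeta\) near \(K\), a holomorphic family of automorphisms \(\Phi_\zeta\) of \(\C^n\) with an attracting fixed point at \(f(\zeta)\) whose basin avoids \(E\). The basin of an attracting fixed point of an automorphism is biholomorphic to \(\C^n\) (Rosay--Rudin), and the Fatou--Bieberbach map can be normalized as in \eqref{eq:general-normalization}.

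The first step is periodic transport. Since \(C\) is compact and \(\OO(X)\)-convex and \(X\) is Stein, we can choose an \(\OO(X)\)-convex compact neighbourhood \(C'\) of \(C\) and a smooth function on \(X\) exhausting near \(C\). On \(\C^n\) we then want a complete holomorphic vector field \(W\) that is \(\Gamma\)-periodic, i.e.\ the pullback \(q^*\) of a vector field on \(X\), and divergence-free. Its time-\(T\) flow should push the compact set \(f(K)\), together with a neighbourhood of it, far away from \(E\). Concretely, we use shears \(z\mapsto z+g(z_1,\dots,\hat z_j,\dots,z_n)e_j\) with \(g\) periodic, so \(g\) is a function of the \(e^{\ii z_k}\). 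These are automorphisms with Jacobian \(1\). Using Andersén--Lempert-type approximation on the \(\OO(X)\)-convex set \(q(f(K))\cup C\), we can approximate an isotopy that moves points of \(Y\) toward a region in which translation in the imaginary directions escapes \(E\); note that \(E\) is bounded in the \(\im z\) directions. The holomorphic convexity of \(C\) is exactly what lets the Runge-type step in \(X\) work. This gives a \(\Gamma\)-equivariant volume-preserving automorphism \(\Psi\) with \(\Psi(E)\approx E\) and \(\Psi(f(\zeta))\) far from \(E\). We expect this step to be the main obstacle. We must control the Jacobian and the periodicity simultaneously, and keep \(\Psi(E)\) inside a small neighbourhood of \(E\) uniformly in the parameters. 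The first thing we would try is the Andersén--Lempert theorem for divergence-free fields on \((\C^*)^n\) with its standard volume form \(\prod dw_k/w_k\), since that volume form pulls back to \(dz\) on \(\C^n\).

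Next we construct the attracting maps. We set \(\Phi_\zeta=\Psi^{-1}\circ A_\zeta\circ\Psi\), where \(A_\zeta\) is a contracting map: an affine contraction toward \(\Psi(f(\zeta))\), composed with a large translation in a direction \(\ii v\), \(v\in\R^n\), chosen so that iterates starting in a tube region avoid the \(\Gamma\)-saturated thickened obstacle. Since \(A_\zeta\) is not itself periodic, we arrange instead that the region \(R=\{\im z\cdot v>c\}\) is mapped into itself and contains the attracting point. Because \(E\) sits in \(\{|\im z|\le c_0\}\), such half-spaces miss \(E\). The basin is then the union of the preimages of a small ball under the iterates. To see that this union avoids \(E\), we would show that it equals \(\Psi^{-1}\) of a basin lying inside a region disjoint from \(\Psi(E)\). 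This requires \(A_\zeta\) to be an automorphism whose basin is a proper Fatou--Bieberbach domain, so we would use a Rosay--Rudin-type construction with nonlinear shears tuned to the periodic structure: a composition of shears contracting in most directions and expanding on \(E\).

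Finally, the Fatou--Bieberbach maps are \(F_\zeta=\lim_k\Phi_\zeta^{-k}\circ L_\zeta^{k}\), suitably normalized, where \(L_\zeta=D\Phi_\zeta(f(\zeta))\). The parametric Rosay--Rudin construction gives holomorphic dependence on \(\zeta\), the normalization \(F(\zeta,0)=f(\zeta)\), \(D_\xi F=I_n\) after conjugating by \(L_\zeta\)-compatible linear maps, and the fibrewise biholomorphism onto \(\{(\zeta,z):z\in\Omega_\zeta\}\). For the Oka conclusion, we approximate \(f\) on \(K\) by \(\zeta\mapsto F(\zeta,h(\zeta))\), where \(h:\C^m\to\C^n\)... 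Better: the map \((\zeta,\xi)\mapsto F(\zeta,\xi)\) yields a dominating spray over \(V\). By Kusakabe's characterization (convex approximation for a fibered spray), this gives \(Y\) the convex approximation property, so \(Y\) is Oka.
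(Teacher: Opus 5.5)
Your proposal has a genuine gap in each of its two main steps.

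\textbf{The conjugating map cannot be independent of \(\zeta\).} Your \(\Psi\) is a single automorphism, close to the identity on \(E\), that must push all of \(f(K)\) into a fixed convex region (the half-space \(\{\im z\cdot v>c\}\)) disjoint from \(\Psi(E)\). In general this is topologically impossible. If a loop \(\ell\subset f(K)\) is nontrivial in \(\pi_1(Y)\), then \(\Psi(\ell)\) is contractible in the convex region, hence in \(\C^n\setminus\Psi(E)=\Psi(Y)\), so \(\ell\) is contractible in \(Y\). Such loops occur. Take \(n=2\), \(E=T_1\) (so \(Y\simeq S^1\)), \(K=[-\tfrac12,\tfrac12]\subset\C\), and \(f(\zeta)=(3\ii\cos2\pi\zeta,\,3\ii\sin2\pi\zeta)\). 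Then \(\norm{\im f}_\infty\geq3/\sqrt2>1\) on \(\C\), and \(f(K)\) is a closed loop winding once around the tube. Relatedly, \(q(f(K))\cup C\) need not be \(\OO(X)\)-convex: \(q(f(K))\) can be a circle bounding a holomorphic disc in \(X\) that meets \(C\). So your Runge step is not available in \(X\) itself.

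The paper instead joins each \(f(\zeta)\) to one fixed point \(p_*\) by a holomorphic path \(p(t,\zeta)\) avoiding \(E\) for \(t\in[0,1]\). This is Lemma~\ref{lem:holomorphic-path}, which uses connectedness of \(Y\) and convexity of the parameter set. It then transports along this path with a \(\zeta\)-dependent family \(A_\zeta\) (Lemma~\ref{lem:transport}). The convexity used there is that of \(([0,1]\times L\times C)\cup\{(s,\zeta,q(p(s,\zeta))):(s,\zeta)\in[0,1]\times L\}\) in \((\C\times U)\times X\). It holds by Lemma~\ref{lem:union}, because the whole graph is a closed submanifold disjoint from the first piece. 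After an exact translation, \(\widetilde A_\zeta(f(\zeta))=p_*\) and \(\widetilde A_\zeta(E)\) lies in a slightly larger tube \(T_S\).

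\textbf{You never produce a basin that avoids the obstacle.} An affine contraction has basin all of \(\C^n\). Forward invariance of the half-space containing the fixed point says nothing about orbits starting on \(\Psi(E)\). The closing appeal to shears ``expanding on \(E\)'' leaves the whole difficulty unspecified. What is needed is a closed forward-invariant set containing the obstacle but not the fixed point. The missing observation is that halving preserves tubes: \(\tfrac12T_S=T_{S/2}\).

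The paper takes one non-parametric periodic transport \(A\) with \(A\approx\id\) on \(T_S\) and \(A(p_*)\approx2p_*\), where \(p_*=(\ii h,\ldots,\ii h)\) and \(h>4S\). It sets \(\Psi(z)=p_*+\tfrac12(A(z)-A(p_*))\). Then \(\Psi(T_S)\subset T_{S/2+\epsilon}\subset T_S\), the fixed point \(p_*\notin T_S\) is attracting, and its basin \(\Omega\) misses \(T_S\). Rosay--Rudin gives \(\Phi:\C^n\to\Omega\) (Lemma~\ref{lem:tube}). The family is \(F(\zeta,\xi)=\widetilde A_\zeta^{-1}(\Phi(M_\zeta\xi))\) with \(M_\zeta=D\widetilde A_\zeta(f(\zeta))\), and the dynamics are \(\widetilde A_\zeta^{-1}\circ\Psi\circ\widetilde A_\zeta\). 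So the fixed model is conjugated by the \(\zeta\)-dependent transport, the reverse of your arrangement. This removes the topological obstruction and makes a parametric Rosay--Rudin limit unnecessary.
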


Here \(I_n\) denotes the \(n\times n\) identity matrix. A Fatou--Bieberbach domain in \(\C^n\) is a proper domain biholomorphic to \(\C^n\). The nonemptiness of \(C\) guarantees that the domains in the theorem are proper; the Oka assertion for \(C=\varnothing\) is immediate. Both \(X\setminus C\) and \(Y\) are automatically path connected by Lemma~\ref{lem:periodic-connectedness}. Corollary~\ref{cor:quotient-complement} shows that \(X\setminus C\) is Oka as well.

For a related use of an exponential covering to treat a periodic obstacle, see \cite[Corollary~5.7]{Kus24}, concerning complements of small balls translated along a one-dimensional lattice.

Our applications concern convex tubes and products of planar compact sets. For a compact convex set \(B\subset\R^n\), define
\begin{equation}\label{eq:convex-tube}
T_B\coloneqq\R^n+\ii B=\{z\in\C^n:\im z\in B\}.
\end{equation}

\begin{theorem}\label{thm:main}\leavevmode
\begin{enumerate}[label=\textup{(\roman*)},leftmargin=*,itemsep=2pt]
\item For every \(n\geq2\) and every compact convex set \(B\subset\R^n\), the complement \(\C^n\setminus T_B\) is Oka.
\item For every \(n\geq2\) and compact sets \(K_1,\ldots,K_n\subset\C\) such that \(\C\setminus K_j\) has at most one bounded connected component for each \(j\), the complement \(\C^n\setminus(K_1\times\cdots\times K_n)\) is Oka. No regularity or connectedness assumption is imposed on the sets \(K_j\).
\end{enumerate}
In particular, \(\C^n\setminus\R^n\) is Oka for every \(n\geq2\).
\end{theorem}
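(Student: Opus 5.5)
Theorem~\ref{thm:main} follows from Theorem~\ref{thm:periodic-complement} once each obstacle is realized, after an affine change of coordinates in \(\C^n\), as a closed subset of some \(E=q^{-1}(C)\) with \(C\) compact and \(\OO(X)\)-convex. The reduction uses two facts. First, Oka is invariant under biholomorphism. Second, enlarging the obstacle within a controlled family should still give Oka. The second point needs care, since a subset of an Oka complement is not automatically Oka. The plan is therefore to realize each obstacle exactly, either as \(q^{-1}(C)\) or as a limit of such sets. For the limit case we use that Oka is a local condition passing to increasing unions by Kusakabe's localization; equivalently, the convex approximation property for \(f:U_0\to\C^n\setminus S\) only sees a compact image, and the image avoids some neighborhood of \(S\).

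For (i), the map \(q\) sends \(T_B=\R^n+\ii B\) to
\[
C_B=\{w\in X:(-\log|w_1|,\ldots,-\log|w_n|)\in B\}.
\]
This is a compact Reinhardt set whose logarithmic image is the convex set \(-B\), and \(T_B=q^{-1}(C_B)\) exactly. Compact Reinhardt subsets of \((\C^*)^n\) that are closed under the torus action and have convex logarithmic image are \(\OO(X)\)-convex. To see this, separate a point outside \(C_B\) using a Laurent monomial \(w^\alpha\) with \(\alpha\in\Z^n\): for rational directions this is the supporting hyperplane argument for \(\log|w^\alpha|=\langle\alpha,\log|w|\rangle\). Irrational directions are handled by density of rational directions together with compactness of \(B\). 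Theorem~\ref{thm:periodic-complement} then applies directly, and the case \(B=\{0\}\) gives \(\C^n\setminus\R^n\).

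For (ii), handle each factor separately. If \(\C\setminus K_j\) has no bounded component, \(K_j\) is polynomially convex. If there is exactly one bounded component \(D_j\), choose a point \(a_j\in D_j\). Then \(K_j\) is \(\OO(\C\setminus\{a_j\})\)-convex in \(\C^*_{a_j}=\C\setminus\{a_j\}\), since its complement there has no relatively compact component. In the polynomially convex case we would like to put the puncture outside \(K_j\) while keeping \(K_j\) convex in \(\C\setminus\{a_j\}\). Any \(a_j\notin K_j\) works, because removing a point from the unbounded complementary component creates no bounded hole.

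Next, translate so that \(a_j=0\). Take the biholomorphism \(\C^*\to\C^*\) to be the identity and set \(C=K_1\times\cdots\times K_n\subset(\C^*)^n\). Products of \(\OO(\C^*)\)-convex compacts are \(\OO((\C^*)^n)\)-convex. Here is the main obstacle: the obstacle in \(\C^n\) is \(K_1\times\cdots\times K_n\) itself, not the periodic set \(q^{-1}(C)\). The plan to resolve this is to use Corollary~\ref{cor:quotient-complement}, stated after the theorem, which shows that \(X\setminus C\) is Oka. Then \(\C^n\setminus(K_1\times\cdots\times K_n)\) is \((\C^*)^n\setminus C\) with the coordinate hyperplanes \(\{w_j=0\}\) adjoined.

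I would handle that adjunction with Kusakabe's localization principle. Cover \(\C^n\setminus\prod K_j\) by the Zariski-open pieces \((\C^*)^n\setminus C\) and by neighborhoods of points on \(\{w_j=0\}\). Near such points the set is locally a complement of the form \(\C^{\,}\times(\text{product of fewer factors})\), which is Oka by induction on the number of factors with \(K_j\neq\C\). Since \(0\in D_j\) is an interior hole, a neighborhood of \(\{w_j=0\}\) does not meet \(\prod K_j\). The remaining task is verifying the Zariski-open localization hypotheses, that is, the existence of sprays on the affine pieces. I expect that verification to be the delicate step.
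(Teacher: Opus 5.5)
\textbf{Part (i)} is correct and follows the paper: both write \(T_B=q^{-1}(C_B)\) and apply Theorem~\ref{thm:periodic-complement}. Only the convexity proof differs. You perturb the strictly separating direction to a rational one, which is legitimate because strict separation from the compact set \(-B\) is an open condition on the direction, and then separate with a Laurent monomial. The paper separates with the pluriharmonic function \(\sum_j a_j\log|v_j|\) and uses that plurisubharmonic and holomorphic hulls agree on Stein manifolds. Your argument is more elementary and equally valid.

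\textbf{Part (ii)} has a genuine gap. With one puncture \(a_j\) per factor, you correctly obtain a Zariski open Oka subset
\[
M_a=M\setminus\bigcup_j\{x_j=a_j\},\qquad M=\C^n\setminus\prod_jK_j,
\]
since translation identifies \(M_a\) with \(X\setminus\prod_j(K_j-a_j)\) and Corollary~\ref{cor:quotient-complement} applies. However, Theorem~\ref{thm:localization} requires every point of \(M\), including the points on the hyperplanes \(\{x_j=a_j\}\), to have a \emph{Zariski open} Oka neighborhood. You do not supply one.

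Euclidean neighborhoods are not admissible, because the Oka property is not local in the classical topology. For example, the obstacle-free neighborhood \(\{|x_j-a_j|<\rho\}\) is a disk times \(\C^{n-1}\). Its entire curves have constant \(j\)-th coordinate by Liouville, so it is not Oka. Your induction on the number of factors also produces no Zariski open subset of \(M\) containing these points, so the step you flag as delicate is exactly where the argument is missing.

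The missing idea is a second puncture. For each \(j\), choose two distinct points \(a_j^0\neq a_j^1\):
\begin{enumerate}[label=\textup{(\alph*)},leftmargin=*]
\item if \(\C\setminus K_j\) has a bounded component, take both points in it, which is possible because it is open;
\item otherwise take any two points off \(K_j\).
\end{enumerate}
Then each \(K_j-a_j^\nu\) is \(\OO(\C^*)\)-convex by the Runge criterion, Lemma~\ref{lem:punctured-convexity}. The \(2^n\) sets \(M_a\) with \(a_j\in\{a_j^0,a_j^1\}\) are Zariski open and Oka. They cover \(M\), because each coordinate \(x_j\) differs from at least one of \(a_j^0,a_j^1\). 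This is Proposition~\ref{prop:product-localization}, and it is precisely where the hypothesis of at most one bounded complementary component is used: every puncture must lie in every bounded component.

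Your opening remarks about passing to limits of obstacles, on the grounds that the image of \(f\) avoids a neighborhood of \(S\), are not sound either. The approximating maps are entire and their images are not compact. You never use that route, but the same ``Oka is local'' intuition is what produces the gap above.
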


Part~\textup{(ii)} includes products of closed disks, circles, and annuli. For \(\boldsymbol r=(r_1,\ldots,r_n)\) and \(\boldsymbol R=(R_1,\ldots,R_n)\), with \(0<r_j\leq R_j<\infty\), write
\begin{equation}\label{eq:annuli}
A_{\boldsymbol r,\boldsymbol R}
\coloneqq\{x\in\C^n:r_j\leq|x_j|\leq R_j,\ j=1,\ldots,n\}.
\end{equation}
Its complement is Oka, including the case of the standard product torus
\begin{equation}\label{eq:product-torus}
T\coloneqq\{x\in\C^n:|x_j|=1,\ j=1,\ldots,n\}
=A_{\boldsymbol1,\boldsymbol1}.
\end{equation}

When all planar factors in Theorem~\ref{thm:main}\,\textup{(ii)} are polynomially convex, their product is polynomially convex and its Oka complement already follows from \cite[Corollary~1.3]{Kus24}. The product formulation also permits a bounded complementary component in each factor.

For the constructions below, set
\begin{equation}\label{eq:tube}
T_\delta\coloneqq\{z\in\C^n:\norm{\im z}_\infty\leq\delta\},
\qquad \norm{y}_\infty\coloneqq\max_{1\leq j\leq n}|y_j|,
\quad \delta\geq0.
\end{equation}
Thus \(T_\delta=T_{[-\delta,\delta]^n}\) and \(T_0=\R^n\).

For the proof, we combine periodic divergence-free transport with attracting basins to construct entire holomorphic sprays. Kusakabe's proof of the complement theorem uses holomorphic families of nonautonomous attracting basins and their relative ellipticity to construct dominating sprays \cite[Lemma~4.4 and proof of Theorem~4.2]{Kus24}. The overall strategy builds on the work of Forstneri\v c and Wold, who constructed holomorphic families of Fatou--Bieberbach domains with prescribed centers and used them to obtain Oka complements \cite[Theorem~1.1]{FW20}, \cite[Theorem~2.3]{FW24}. The issue addressed here is uniform control of a noncompact forbidden set through its compact periodic quotient.

Lemma~\ref{lem:transport} transports a holomorphic section along a prescribed holomorphic path outside \(E\), while keeping the automorphisms uniformly close to the identity on all of \(E\). It also controls the first derivative near the section on a neighborhood independent of the approximation error. In dimension two, the local field is generated by one Hamiltonian. In arbitrary dimension, skew-symmetric coordinate-pair potentials give the required local fields. Each nonconstant Laurent mode has a complete flow; these are lifts of the multiplicative shears displayed by Anders\'en \cite[equation~(1)]{And00}, with the unused coordinates held fixed.

The finite-flow approximation is a volume-preserving Anders\'en--Lempert argument, originating in Anders\'en's work \cite{And90} and developed further by Anders\'en and Lempert \cite{AL92}, Forstneri\v c and Rosay \cite{FR93}, and Kutzschebauch in the parameter setting \cite{Kut05}; a survey is given in \cite{FK22}. Varolin established the volume density property of the multiplicative torus \cite[Corollary~4.5]{Var01}; see also Kaliman and Kutzschebauch \cite[Proposition~4.5]{KK10} for its algebraic form. Here, Laurent approximation of explicit scalar potentials and finite compositions of the corresponding complete flows provide the required volume-preserving approximation.

Whether \(X=(\C^*)^n\), \(n\geq2\), has the density property remains open \cite[Problem~2.9]{FK22}. Thus its known volume density property does not suffice for a direct application of Kusakabe's complement theorem \cite[Theorem~1.2]{Kus24}. Our construction combines volume-preserving transport on the quotient with an affine contraction on the covering space; the contraction need not descend to \(X\).

Compactness of \(C\) bounds the imaginary parts of every point of \(E\). We first construct one attracting basin outside a larger rectangular tube. Connectedness of \(Y\) and convexity of the parameter set provide a path from the prescribed section to a fixed center in that basin. Periodic transport, followed by an exact correction at the center, moves the obstacle into the larger tube. Pulling back the fixed basin gives the family in Theorem~\ref{thm:periodic-complement}. For positive-width rectangular tubes, Proposition~\ref{prop:family} adds strict forward invariance of the original tube. Its proof uses the parameter-uniform basin coordinates of Proposition~\ref{prop:basins}.

The normalized families satisfy Kusakabe's convex spray criterion \cite[Theorem~2.2]{Kus21}, which gives the Oka property within the spray framework initiated by Gromov \cite{Gro89}. For compact products, logarithmic covers of suitable Zariski open subsets of the complement provide the required families. Their sprays descend to these subsets, and Kusakabe's localization theorem \cite[Theorem~1.4]{Kus21} then applies to the whole complement. The planar hypothesis in Theorem~\ref{thm:main}\,\textup{(ii)} permits one puncture to meet every bounded complementary component; Lemma~\ref{lem:punctured-convexity} expresses this condition in terms of Runge convexity.

An affine complex change of coordinates extends Theorem~\ref{thm:main}\,\textup{(i)} to complements of maximally totally real affine planes and their compact convex transverse tubes (Corollary~\ref{cor:affine}). Part~\textup{(ii)} also includes products of Euclidean circles with arbitrary centers and positive radii.

After the analytic preliminaries in Section~\ref{sec:preliminaries}, Sections~\ref{sec:periodic} and~\ref{sec:basins} develop periodic transport and attracting basins to prove Theorem~\ref{thm:periodic-complement}. The applications, including Theorem~\ref{thm:main}, follow in Section~\ref{sec:applications}.

\section{Holomorphic approximation and sprays}\label{sec:preliminaries}

\subsection{Holomorphic convexity and scalar approximation}\label{subsec:stein-background}

We write \(\Ball_R\) for the open Euclidean ball of radius \(R\) centered at the origin in the ambient complex vector space, \(\OO(Z)\) for the algebra of holomorphic functions on a complex manifold \(Z\), and \(E\Subset U\) when \(\overline E\) is compact and contained in \(U\). Holomorphic dependence of a family \(A_\zeta\) means joint holomorphicity of \((\zeta,z)\mapsto A_\zeta(z)\). A vector field on a product \(B\times F\) is \emph{vertical} if it is tangent to the fibers of \(\operatorname{pr}_B\); vertical derivatives act only in the fiber variable. Below, \(F\) is either \(\C^d\) or \((\C^*)^n\).

For a nonempty compact set \(P\subset Z\), its holomorphic hull is
\[
\widehat P_{\OO(Z)}\coloneqq \{z\in Z:|h(z)|\leq\sup_P|h|\text{ for every }h\in\OO(Z)\}.
\]
The set \(P\) is \(\OO(Z)\)-convex if \(\widehat P_{\OO(Z)}=P\); the empty set is regarded as holomorphically convex. Compact convex subsets of \(\C^m\) are \(\OO(\C^m)\)-convex. Convex domains in \(\C^m\), \(\C^*\), and finite products of Stein manifolds are Stein \cite[Sections~2.1--2.2]{For17}.

The Oka--Weil theorem \cite[Theorem~2.3.1]{For17} states that if \(Z\) is Stein, \(P\subset Z\) is compact and \(\OO(Z)\)-convex, and \(h\) is holomorphic near \(P\), then \(h\) can be approximated uniformly on \(P\) by functions in \(\OO(Z)\). Such a compact set \(P\) has arbitrarily small compact \(\OO(Z)\)-convex neighborhoods \cite[Proposition~2.5.1, Corollary~2.5.3, and Proposition~2.5.5]{For17}. In particular, every open neighborhood \(U\) of \(P\) contains compact \(\OO(Z)\)-convex neighborhoods \(C_0,C_1\) with
\[
P\subset\Int C_0\subset C_0\subset\Int C_1\subset C_1\subset U.
\]

\subsection{Oka manifolds and dominating sprays}\label{subsec:oka-background}

The following criterion of Kusakabe characterizes the Oka property \cite[Theorem~2.2]{Kus21}. The spray condition below, denoted by \(\mathrm{C}\text{-}\mathrm{Ell}_1\), is the convex version of Gromov's condition \(\mathrm{Ell}_1\) \cite[Definition~3.1(b)--(c)]{For23}.

\begin{theorem}\label{thm:kusakabe}
A complex manifold \(Y\) is Oka if and only if the following condition holds. For every \(m\geq1\), every compact convex set \(K\subset\C^m\), and every holomorphic map \(f\) from a neighborhood of \(K\) to \(Y\), there exist a neighborhood \(V\) of \(K\) contained in the domain of \(f\), an integer \(N\geq1\), and a holomorphic map
\[
S:V\times\C^N\longrightarrow Y
\]
with \(S(\zeta,0)=f(\zeta)\), such that \(D_\xi S(\zeta,0)\) is surjective for every \(\zeta\in V\).
\end{theorem}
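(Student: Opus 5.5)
This criterion is quoted from Kusakabe \cite[Theorem~2.2]{Kus21}. Here I outline how I would prove the two implications.

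\emph{Oka implies the spray condition.} Assume \(Y\) is Oka, and let \(f\) be holomorphic near \(K\). I would first embed a neighborhood of the graph locally. Concretely, shrinking to a compact convex neighborhood \(K'\) of \(K\), I would find holomorphic vector fields \(v_1,\dots,v_N\) defined along \(f\) (sections of \(f^*TY\) over a Stein neighborhood \(V'\) of \(K'\)) that span \(T_{f(\zeta)}Y\) at every point. This uses Cartan's Theorem A for the coherent sheaf \(f^*TY\) on the Stein domain \(V'\).

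Next I would build a local spray \(s(\zeta,t)\), defined for \(\zeta\in V'\) and \(t\) small, with \(s(\zeta,0)=f(\zeta)\) and \(\partial_t s(\zeta,0)=(v_1,\dots,v_N)\). One way is to use a holomorphic retraction from a tubular neighborhood of the graph of \(f\) in \(V'\times Y\); the graph is a Stein submanifold, so Siu's theorem gives it a Stein neighborhood. Another way is to compose local flows of extended fields.

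The domain of \(s\) is a neighborhood of \(K'\times\{0\}\). After shrinking it to \(K'\times\overline{\Ball}_r\), it contains a compact convex neighborhood of \(K\times\{0\}\) in \(\C^{m+N}\). Applying the convex approximation property with source \(\C^{m+N}\) gives an entire map \(S\) that approximates \(s\) on \(K\times\overline{\Ball}_r\).

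Two defects then have to be fixed: the condition \(S(\zeta,0)=f(\zeta)\) is lost, and surjectivity of \(D_\xi S\) is only approximate. Surjectivity is an open condition, so it survives if the approximation also holds in \(C^1\), which Cauchy estimates provide on a slightly smaller set. The exact center condition is the \textbf{main obstacle}. I would handle it by using the equivalence, cited in the introduction, between the convex approximation property and the Oka property with interpolation \cite[Theorem~5.4.4]{For17}. With it, one approximates \(s\) on the convex set while interpolating \(s\) on the closed submanifold \(\{t=0\}\) of the convex domain, so that \(S(\zeta,0)=f(\zeta)\) holds exactly.

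\emph{The spray condition implies Oka.} I would prove the convex approximation property by a Runge-type argument along an exhaustion of \(\C^m\) by balls \(K=K_0\subset K_1\subset\cdots\). At each stage, the hypothesis yields a dominating spray \(S\) over a neighborhood of \(K_j\). Its domination property lets one solve the nonlinear Cousin problem on a convex bump \(K_j\cup B\) by gluing, as in the standard Gromov/Forstneri\v c splitting lemma. To do this, one takes a local holomorphic extension across the bump, which exists because convex sets are contractible and \(Y\) is locally contractible. One then writes the transition as \(S(\zeta,\xi(\zeta))\) for a small \(\xi\) and splits \(\xi\) by Cartan's lemma.

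Iterating over finitely many convex bumps and then passing to the limit produces an entire map close to \(f\) on \(K\). The delicate part of this direction is that each spray is available only over a neighborhood of a convex set. The bumps must therefore be chosen so that every union \(K_j\cup B\) stays convex, which is possible in \(\C^m\) by cutting along hyperplanes.
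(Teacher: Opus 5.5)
The paper does not prove this statement; it quotes it from \cite[Theorem~2.2]{Kus21}. Your direction ``Oka $\Rightarrow$ sprays'' is fine in outline. One detail is missing: before invoking the Oka property with interpolation on $\{\xi=0\}$, you must extend the local spray continuously to all of $V\times\C^N$, for example by a radial cutoff in the fibre. The converse, which is the actual content of Kusakabe's theorem, has a genuine gap.

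The failing step is your claim that a holomorphic map on the bump exists ``because convex sets are contractible and $Y$ is locally contractible''. Topology gives only a continuous extension. The gluing lemma needs a map $g$ that is holomorphic on a neighbourhood of the bump $B$ and uniformly close to $f$ on $A\cap B$, where $A=K_j$.

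\begin{itemize}
\item If $B$ stays inside the domain of $f$, this is trivial, but gluing achieves nothing. The sets never leave a neighbourhood of $A$ whose width you do not control, so you never reach the next ball of the exhaustion.
\item If $B$ leaves the domain of $f$, constructing $g$ is exactly the convex approximation problem for the convex pair $A\cap B\subset B$. That is the statement you are trying to prove.
\end{itemize}

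Your justification also cannot be right as stated, because it uses only properties shared by the unit disc $\{|w|<1\}\subset\C$, where such $g$ need not exist. Schwarz--Pick bounds the derivative of a disc-valued map on $B$ in terms of the size of $B$. By contrast, $f$ is holomorphic only on a thin neighbourhood of $A$, so it can have a much larger derivative at points of $A\cap B$ near $\partial A$. Cauchy estimates then rule out close approximation.

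A routine use of the spray hypothesis does not repair this either. Suppose you contract $f$ through $f(tz)$ and correct by sprays. Each step needs a spray over the previous approximant. The radius on which that spray can absorb errors is known only after the approximant, and hence its error, has been fixed. So the errors cannot be kept below that radius.

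What is missing is an argument that converts entire-fibred sprays over maps on convex sets into approximation by maps on larger convex sets. That is the substance of Kusakabe's proof, and your bump induction cannot start without it.
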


The map \(S\) in this criterion is called a \emph{dominating holomorphic spray over \(f\) with entire fiber} \(\C^N\).

Kusakabe's localization theorem takes the following form \cite[Theorem~1.4]{Kus21}.

\begin{theorem}\label{thm:localization}
If every point of a complex manifold has a Zariski open Oka neighborhood, then the manifold is Oka.
\end{theorem}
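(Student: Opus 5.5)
This is the localization theorem of Kusakabe \cite[Theorem~1.4]{Kus21}, quoted here without proof; the paper only applies it. If one wanted to reprove it, I would do so through the criterion of Theorem~\ref{thm:kusakabe}: it suffices to produce, for every compact convex \(K\subset\C^m\) and every holomorphic \(f\) from a neighborhood of \(K\) to \(Y\), a dominating spray over \(f\) with entire fiber. The plan below is a reconstruction of the likely structure, not a checked argument, and the gluing step in particular is left open.

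\textbf{Step 1: local sprays.} Start from a \emph{local} dominating spray over \(f\). By Siu's theorem the graph of \(f\) over a convex neighborhood \(V\) of \(K\) has a Stein neighborhood in \(V\times Y\). In such a neighborhood one can build a holomorphic map
\[
s:V\times\Ball_r\to Y,\qquad s(\zeta,0)=f(\zeta),
\]
with \(D_ts(\zeta,0)\) surjective. This can be done by flowing along finitely many holomorphic vector fields on the Stein neighborhood that span the vertical tangent spaces. The only remaining task is to replace the small ball \(\Ball_r\) by a whole \(\C^N\).

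\textbf{Step 2: use the Zariski open Oka sets.} By hypothesis, \(f(K)\) is covered by finitely many Zariski open Oka sets \(Y_i=Y\setminus A_i\), \(i=1,\dots,k\), with each \(A_i\) a closed analytic subset. For each \(i\), the set
\[
\{(\zeta,t)\in V\times\Ball_r : s(\zeta,t)\in A_i\}
\]
is a closed analytic subset of \(V\times\Ball_r\). It is proper, because it misses the points \((\zeta,0)\) with \(f(\zeta)\in Y_i\). On the complement of this set, \(s\) maps into the Oka manifold \(Y_i\). The Oka property of \(Y_i\), applied with convex parameter sets, should then give entire extensions in extra fiber variables. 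The natural device is iterated composition of sprays,
\[
S(\zeta,t_1,\dots,t_k),
\]
in which the \(i\)-th stage uses the Oka property of \(Y_i\). The domination at \(t=0\) is inherited from \(s\), since the later stages only add variables and do not destroy the surjectivity already present.

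\textbf{Main obstacle: gluing across \(A_i\).} I expect the hard step to be this gluing. The map \(f\) need not avoid \(A_i\), so no single \(Y_i\) receives all of \(f\). What I would try first is to show that the bad locus is thin enough, being of positive codimension in the parameter and fiber variables, that the convex approximation property of \(Y_i\) can be applied on compact convex pieces of \(V\times\C^N\) avoiding it. Convexity of \(K\) would then be used to ensure that the resulting maps extend across the bad locus holomorphically with values in \(Y\). Concretely, I would exploit that maps into \(Y\setminus A_i\) are also maps into \(Y\), so the extension only has to land in \(Y\) and not in \(Y_i\).
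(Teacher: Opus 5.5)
Your opening sentence matches the paper exactly. Theorem~\ref{thm:localization} is quoted from \cite[Theorem~1.4]{Kus21} without proof and is only applied, in the proof of Proposition~\ref{prop:product-localization}. That citation is the complete justification here. The reconstruction you append, however, is not a proof. The step you leave open is where the substance lies beyond Theorem~\ref{thm:kusakabe}, and the route you sketch for it cannot work as stated.

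\textbf{Where the reconstruction fails.} The basic obstruction is about values. Every map produced by the Oka property of \(Y_i\) takes values in \(Y_i=Y\setminus A_i\); this holds whether it comes from approximation, from interpolation, or from a spray as in Theorem~\ref{thm:kusakabe}. A spray over \(f\), by contrast, must take the value \(f(\zeta)\in A_i\) at the origin of the fiber for every \(\zeta\in f^{-1}(A_i)\). The same holds for each stage of your iterated composition, since each stage is a spray over the previous one. So the \(i\)-th stage can never be a map into \(Y_i\). The missing idea is a mechanism that turns the Oka property of \(Y_i\) into maps into \(Y\) that agree with \(f\) over \(f^{-1}(A_i)\) and are dominating over \(f^{-1}(Y_i)\).

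Your proposed substitute does not supply this mechanism, for four reasons.
\begin{enumerate}[label=\textup{(\roman*)},leftmargin=*]
\item The complement of \(s^{-1}(A_i)\) in \(V\times\Ball_r\) contains no neighborhood of any compact convex set meeting \((K\cap f^{-1}(A_i))\times\{0\}\). The convex approximation property of \(Y_i\) therefore only gives entire maps that approximate \(s\) on sets avoiding the bad locus. These are approximations, not extensions of \(s\), and they carry no control near that locus.
\item The bad locus is a hypersurface whenever \(A_i\) has a component of codimension one. Holomorphic maps need not extend across hypersurfaces, as \(e^{1/z}\) shows.
\item Codimension two does not help for non-Stein targets either: \((z,w)\mapsto[z:w]\) from \(\C^2\setminus\{0\}\) to the Oka manifold \(\mathbb P^1\) does not extend. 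Convexity of \(K\) has no bearing on removability.
\item Domination cannot be inherited from \(s\). Its fiber is only the ball \(\Ball_r\), so \(s\) cannot be a stage of a spray with fiber \(\C^N\). Domination over \(f^{-1}(Y_i)\) would have to come from the \(Y_i\)-stages themselves, which is precisely the unproved claim.
\end{enumerate}
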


Here Zariski open means that the complement is a closed complex subvariety of the manifold, not necessarily an algebraic subvariety of the ambient Euclidean space.

\subsection{The covering and coordinate-pair fields}\label{subsec:flow-background}

Recall the covering \(q:\C^n\to X=(\C^*)^n\) in \eqref{eq:covering} and its deck group \(\Gamma=2\pi\Z^n\). The coordinates on the additive covering space and on \(X\) are denoted by \(z=(z_1,\ldots,z_n)\) and \(w=(w_1,\ldots,w_n)\), respectively. In Section~\ref{sec:applications}, the coordinates \(x=(x_1,\ldots,x_n)\) refer to the ambient space containing the compact product.

A scalar function or a vector-valued coefficient function \(h\) on \(\C^n\) is \(\Gamma\)-\emph{periodic} if \(h(z+\gamma)=h(z)\). A map \(A:\C^n\to\C^n\) is \(\Gamma\)-\emph{equivariant} if \(A(z+\gamma)=A(z)+\gamma\). Equivalently, its displacement \(A-\id\) is periodic. For a \(\Gamma\)-periodic holomorphic vector field \(W\), its local flow satisfies \(\Phi_t(z+\gamma)=\Phi_t(z)+\gamma\) wherever both sides are defined: the two curves solve \(\dot z=W(z)\) with the same initial value \(z+\gamma\), so they agree by uniqueness for the initial-value problem.

For a family of holomorphic scalar functions \(H=(H_{ij})_{1\leq i<j\leq n}\), set
\begin{equation}\label{eq:pair-field}
V_H\coloneqq\sum_{i<j}\left(
\frac{\partial H_{ij}}{\partial z_j}\frac{\partial}{\partial z_i}
-\frac{\partial H_{ij}}{\partial z_i}\frac{\partial}{\partial z_j}
\right).
\end{equation}
Mixed second derivatives cancel pairwise, so \(\operatorname{div}V_H=0\). Its local flow therefore has Jacobian determinant one. When \(n=2\), this is the Hamiltonian field with convention \(\iota_{X_h}(\dd z_1\wedge\dd z_2)=\dd h\), where \(h=H_{12}\).

The potentials required for a translation have an explicit form. If \(v\in\C^n\) is independent of the fiber coordinate and \(\eta=z-p\), take
\begin{equation}\label{eq:translation-potentials}
H_{ij}=\frac{v_i\eta_j-v_j\eta_i}{n-1}.
\end{equation}
The \(i\)-th component of \(V_H\) receives \(v_i/(n-1)\) from each of the \(n-1\) pairs containing \(i\), hence \(V_H=v\). This identity also holds when \(p\) and \(v\) depend holomorphically on parameters.

\section{Periodic divergence-free transport}\label{sec:periodic}

The covering \(q\), quotient \(X\), and lattice \(\Gamma\) are those of Section~\ref{subsec:flow-background}.

\begin{lemma}\label{lem:transport}
Suppose \(C\subset X\) is compact and \(\OO(X)\)-convex, \(U\subset\C^m\) is convex and open, and \(L\Subset U\) is compact and convex. Set \(E=q^{-1}(C)\). For a holomorphic map \(p:\C\times U\to\C^n\) with \(p(t,\zeta)\notin E\) for \(t\in[0,1]\) and \(\zeta\in L\), there is \(r>0\), independent of \(\epsilon\), such that for every \(\epsilon>0\) there is a holomorphic family of \(\Gamma\)-equivariant automorphisms \(A_\zeta\), \(\zeta\in U\), of \(\C^n\) satisfying
\begin{align*}
\sup_{\zeta\in L,\;z\in E}|A_\zeta(z)-z|&<\epsilon,\\
\sup_{\zeta\in L,\;|u|\leq r}
|A_\zeta(p(0,\zeta)+u)-p(1,\zeta)-u|&<\epsilon,\\
\sup_{\zeta\in L,\;|u|\leq r}
\norm{D_zA_\zeta(p(0,\zeta)+u)-I_n}&<\epsilon.
\end{align*}
The inverse family is holomorphic, and \(\det D_zA_\zeta=1\). The statement also applies when \(p\) has no parameter \(\zeta\).
\end{lemma}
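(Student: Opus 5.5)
The plan is to build \(A_\zeta\) as a finite composition of time-one maps of \(\Gamma\)-periodic divergence-free fields of the form \(V_H\) in \eqref{eq:pair-field}. Each such field will approximate, on a compact neighbourhood of \(L\times C\) in \(U\times X\) (lifted to \(U\times E\)), the zero field, and near the moving point \(p(t,\zeta)\) the translation field with velocity \(\partial_tp\).

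\emph{Step 1: choice of \(r\) and subdivision.} By compactness of \(L\times[0,1]\) and periodicity, \(\dist(p(t,\zeta),E)\ge 4r_0>0\) for \(t\in[0,1]\), \(\zeta\in L\). Choose \(r<r_0\), fix a compact convex \(L'\Subset U\) with \(L\subset\Int L'\), and subdivide \(0=t_0<\dots<t_N=1\) so finely that on each \([t_{k-1},t_k]\) two conditions hold. First, \(p(t,\zeta)\) moves by less than \(r\). Second, the ball \(\overline{\Ball}(p(t_{k-1},\zeta),3r)\) injects under \(q\), uniformly in \(\zeta\in L'\); this uses local injectivity of \(q\) on balls of radius \(<\pi\). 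It suffices to produce, for each \(k\), a family \(A^k_\zeta\) satisfying the three estimates with \(p(0,\cdot),p(1,\cdot)\) replaced by \(p(t_{k-1},\cdot),p(t_k,\cdot)\), radius \(2r\), and error \(\epsilon'\). Composing then gives the lemma: on \(E\) the errors add, and near the section the compositions stay in slightly shrinking balls with derivatives close to \(I_n\). The radius \(r\) depends only on \(p\), \(C\) and \(L\), not on \(\epsilon\).

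\emph{Step 2: one short step via Oka--Weil.} Set \(c=\partial_tp(t_{k-1},\zeta)\), or use the exact displacement \(v(\zeta)=p(t_k,\zeta)-p(t_{k-1},\zeta)\) as a constant velocity over unit time. Let \(G_\zeta=q(\overline{\Ball}(p(t_{k-1},\zeta),3r))\) and \(P=(L'\times C)\cup\bigcup_{\zeta\in L'}\{\zeta\}\times G_\zeta\subset U\times X\). The key claim is that, for \(r\) small, \(P\) is \(\OO(U\times X)\)-convex. This holds because the two pieces are disjoint compact sets: \(L'\times C\) is convex (product of convex sets), and the second piece is a thin tube around the graph of the holomorphic section \(\zeta\mapsto q(p(t_{k-1},\zeta))\) over the convex set \(L'\). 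A separating function can be produced by a standard argument: the graph is the zero set of \(w-q\circ p\), and combining this with a defining function for the hull of \(C\) should show that the union of a holomorphically convex set with a small neighbourhood of a disjoint holomorphic graph over a convex base remains convex. On a neighbourhood of \(P\) define potentials \(H_{ij}=0\) near \(L'\times C\), and \(H_{ij}\) given by \eqref{eq:translation-potentials} near the tube, with \(v=v(\zeta)\) and \(\eta=\log\) coordinates \(z-p(t_{k-1},\zeta)\) via the local inverse of \(q\). By Oka--Weil, approximate these by \(\tilde H_{ij}\in\OO(U\times X)\), and then, shrinking slightly, by finite Laurent sums in \(w\) with coefficients in \(\OO(U)\). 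Cauchy estimates give \(C^1\) closeness of \(V_{\tilde H}\) to the model field on a smaller neighbourhood. Lifting by \(q\) yields a \(\Gamma\)-periodic field on \(U\times\C^n\), uniformly close to the model on \(L\times E\) and on \(2r\)-balls around the section. Its time-one flow is therefore \(C^1\)-close to the identity on \(E\) and to the translation by \(v\) near the section, with determinant one.

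\emph{Step 3: completeness.} The field \(V_{\tilde H}\) itself is not complete, so write it as a finite sum of fields \(V_{H}\) with \(H=a(\zeta)w^\alpha\) a single Laurent mode in a single coordinate pair. As noted before the lemma, each such field is a lift of an Anders\'en multiplicative shear, so it is complete, with explicitly invertible, periodic-equivariant and volume-preserving flows depending holomorphically on \(\zeta\). By the standard splitting (Lie--Trotter) argument, the composition \((\phi^1_{1/M}\circ\cdots\circ\phi^s_{1/M})^M\) converges to the time-one flow of the sum in \(C^1\) on compact sets of \(U\times X\) where that flow is defined. Periodicity turns convergence on \(q\)-images into uniform convergence on \(L\times E\) and on the lifted balls. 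This gives \(A^k_\zeta\) with holomorphic inverse family and \(\det=1\).

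The main obstacle I expect is the holomorphic convexity of \(P\), uniformly in the parameter. If the direct argument fails, I will take the steps so short that the tube around the section lies in a small polydisc chart and construct the separating function explicitly from the \(\OO(X)\)-convexity of \(C\). The unparametrized case is the special case with \(U\) a point.
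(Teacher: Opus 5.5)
\textbf{Gap: holomorphic convexity of $P$.} The missing step is the one you flag yourself, namely the $\OO(U\times X)$-convexity of $P=(L'\times C)\cup\bigcup_{\zeta\in L'}\{\zeta\}\times G_\zeta$. Your reason is that the two pieces are disjoint holomorphically convex compacta. That does not imply that their union is holomorphically convex. Your sketch via the zero set of $w-q\circ p$ and ``a defining function for the hull of $C$'' is not an argument.

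Your fallback does not touch the difficulty either. Shortening the time steps leaves each $G_\zeta$ a ball of the same radius $3r$. It also does not shrink the $\zeta$-extent of the tube, whose position relative to $C$ varies over all of $L'$. The problem is parametric, not a matter of step length.

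The missing idea is the one the paper uses, Lemma~\ref{lem:union} (Forstneri\v c's lemma). If an $\OO(Z)$-convex compact set is disjoint from a \emph{closed complex submanifold} $G$, then its union with any $\OO(G)$-convex compact subset of $G$ is $\OO(Z)$-convex. It is this submanifold structure that does the work, not disjointness; the paper stresses that one needs $P\cap G=\varnothing$, not merely $P\cap Q=\varnothing$. Your thickened tube is not contained in any such submanifold.

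\textbf{Repair.} Apply Lemma~\ref{lem:union} in $Z=U\times X$ as follows:
\begin{itemize}
\item take $G$ to be the graph of $\zeta\mapsto q(p(t_{k-1},\zeta))$ over $U$, and $Q$ its part over $L'$;
\item choose $L'$ close enough to $L$ that $p(t,\zeta)\notin E$ on $[0,1]\times L'$.
\end{itemize}
Then do not try to prove convexity of the tube itself. Take a compact $\OO(Z)$-convex neighbourhood of $(L'\times C)\cup Q$ inside the domain of your model potentials, and approximate there as in Lemma~\ref{lem:laurent}: Oka--Weil, Laurent truncation, and Cauchy estimates on a smaller fixed neighbourhood. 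Only afterwards fix $r$ so that the relevant balls around the section lie in that smaller neighbourhood. This $r$ is still independent of $\epsilon$.

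\textbf{Comparison with the paper.} With this repair, your route is a valid alternative. You use a fixed subdivision into $N$ steps (independent of $\epsilon$), autonomous fields with the exact displacement $p(t_k,\zeta)-p(t_{k-1},\zeta)$, and composition. The cost is $N$ separate convexity arguments. The paper instead puts time into the parameter space, working in $Z=(\C\times U)\times X$ with the graph of $(s,\zeta)\mapsto q(p(s,\zeta))$. It needs one convexity argument for the whole path and handles the resulting nonautonomous field by frozen-time splitting (Lemma~\ref{lem:splitting}).

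\textbf{Bookkeeping in the composition step.} $A^1_\zeta$ already moves $E$ off itself. So each $A^k_\zeta$ must be $\epsilon'$-close to the identity on a fixed neighbourhood of $E$ of width larger than $N\epsilon'$, not merely on $E$. Your field is small on such a neighbourhood, so this is available, but your per-step statement must include it.
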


We prove the lemma after establishing the scalar and finite-flow approximation needed below.

\subsection{Approximation on the periodic quotient}\label{subsec:periodic-approximation}

The following is the disjoint-set case of Forstneri\v c's holomorphic convexity lemma \cite[Lemma~6.5, p.~111]{For99}. The Stein version follows as well by a proper Stein embedding and Cartan extension: holomorphic convexity on a closed Stein submanifold agrees with ambient polynomial convexity for compact sets \cite[Chapter~2]{For17}.

\begin{lemma}\label{lem:union}
Let \(Z\) be Stein, \(G\subset Z\) be a closed complex submanifold, and \(P\subset Z\setminus G\) be compact and \(\OO(Z)\)-convex. If \(Q\subset G\) is compact and \(\OO(G)\)-convex, then \(P\cup Q\) is \(\OO(Z)\)-convex.
\end{lemma}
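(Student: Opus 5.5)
The plan is to reduce to the case \(Z=\C^N\), where holomorphic convexity of a compact set is polynomial convexity, and to prove directly that every point outside \(P\cup Q\) can be separated from \(P\cup Q\) by an entire function.

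\emph{Reduction.} Take a proper holomorphic embedding \(\iota:Z\hookrightarrow\C^N\). For a compact set in a closed Stein submanifold, \(\OO(Z)\)-convexity coincides with polynomial convexity of its image, because restrictions of polynomials are dense in \(\OO(Z)\) on compacts and every \(h\in\OO(Z)\) extends to \(\C^N\) by Cartan's Theorem B. The images \(\iota(P)\) and \(\iota(Q)\) are then polynomially convex, \(\iota(G)\) is a closed submanifold of \(\C^N\), and \(\iota(P)\cap\iota(G)=\varnothing\). After this step I may assume \(Z=\C^N\).

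\emph{Separation.} Fix a point \(x\notin P\cup Q\).
\begin{enumerate}[label=(\alph*),leftmargin=*]
\item Suppose \(x\notin G\). Since \(P\) and \(G\cap\overline{\Ball}_R\) are disjoint compact sets, Cartan's Theorem A gives finitely many functions \(g_1,\dots,g_k\in\OO(\C^N)\) vanishing on \(G\) with no common zero on \(P\cup\{x\}\).
\begin{itemize}
\item If some \(g_j(x)\neq0\), I use an Oka--Weil argument. The set \(P\cup(G\cap\overline{\Ball}_R)\) is \(\OO\)-convex for large \(R\): its hull lies in \(\widehat P\cup\widehat{G\cap\overline\Ball_R}\), which is handled via the functions \(g_j\). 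The function equal to \(0\) near \(P\cup Q\) and \(1\) near \(x\) is holomorphic on a neighborhood of a convex compact, so Oka--Weil approximates it, and \(x\) is separated.
\item Cleaner still, I use the standard trick: take \(g\in\OO(\C^N)\) with \(g|_G=0\), \(|g|>1\) on \(P\), and \(g(x)\) chosen freely.
\end{itemize}
\item Suppose \(x\in G\setminus Q\). Since \(Q\) is \(\OO(G)\)-convex, there is \(h\in\OO(G)\) with \(|h(x)|>1>\sup_Q|h|\). Extend \(h\) to \(\tilde h\in\OO(\C^N)\) by Cartan extension. Choose \(g\in\OO(\C^N)\) with \(g|_G=0\) and \(\Re g>M\) on \(P\); this is possible because \(P\cap G=\varnothing\) and \(P\) is polynomially convex, so \(P\cup(G\cap\overline\Ball_R)\) admits a function that is large on \(P\) and zero on \(G\). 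Then \(\tilde h\, e^{-g}\) equals \(\tilde h\) on \(G\) and is as small as desired on \(P\). This separates \(x\) from \(P\cup Q\).
\end{enumerate}

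\emph{Main obstacle.} The key step is producing \(g\in\OO(\C^N)\) with \(g=0\) on \(G\) and \(\Re g\) large on \(P\). I would do this as follows.
\begin{enumerate}[label=(\roman*),leftmargin=*]
\item Choose \(R\) so that \(P\cup Q\subset\Ball_R\), and set \(G_R=G\cap\overline\Ball_R\). The hull of \(G_R\) stays in \(G\) near \(Q\), by properness and the polynomial convexity of \(\overline\Ball_R\cap G\) in the Stein manifold \(G\).
\item Show that \(P\cup\widehat{G_R}\) is polynomially convex. By Kallin's lemma it suffices to find a polynomial separating \(P\) and \(\widehat{G_R}\) into disjoint polynomially convex images in \(\C\). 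In general I would instead cite Forstneri\v c's argument: Cartan extension of the function \(0\) on \(G\) and \(M\) on a neighborhood of \(P\), defined on the Stein subvariety \(G\sqcup P'\), with \(P'\) a Stein neighborhood of \(P\) disjoint from \(G\), followed by Oka--Weil.
\end{enumerate}
This last step, sharpened to keep \(g|_G\equiv0\) exactly, is where the care is needed, and I would follow \cite[Lemma~6.5]{For99} for it.
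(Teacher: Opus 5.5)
Your reduction to \(\C^N\) by a proper embedding and Cartan extension is exactly what the paper does. The paper then simply invokes Forstneri\v{c}'s Lemma~6.5 in [For99] for \(Z=\C^N\); had you stopped there, the two proofs would coincide.

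The self-contained separation argument you add, however, has a genuine gap at the step you yourself flag. That step is the existence of \(g\in\OO(\C^N)\) with \(g|_G\equiv0\) and \(\re g>M\) on \(P\), and none of the routes you offer produces it:
\begin{enumerate}[label=(\arabic*),leftmargin=*]
\item The inclusion of the hull of \(P\cup(G\cap\overline\Ball_R)\) in \(\widehat P\cup\widehat{G\cap\overline\Ball_R}\) is not a general fact. Hulls of unions of disjoint polynomially convex compacta can be larger, as Kallin's three disjoint closed polydiscs show. In the present situation, polynomial convexity of \(P\cup(G\cap\overline\Ball_R)\) is precisely the lemma with \(Q=G\cap\overline\Ball_R\), so your step (ii) is circular.
\item Kallin's separation lemma requires a polynomial mapping the two pieces to sets with disjoint hulls in \(\C\), and such a polynomial need not exist.
\item The set \(G\sqcup P'\) with \(P'\) open is not a closed subvariety of \(\C^N\), so Cartan extension is not available on it.
\item Deferring this step to [For99, Lemma~6.5] means deferring to the very statement being proved.
\end{enumerate}

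The missing ingredient is Oka--Weil approximation for sections of the coherent ideal sheaf \(\mathcal J_G\), which can be done by hand.
\begin{enumerate}[label=(\alph*),leftmargin=*]
\item Using \(\widehat P=P\), choose a Stein neighborhood \(W\) of \(P\), say an open polynomial polyhedron, with \(W\Subset\C^N\setminus G\).
\item Theorem~A gives \(g_1,\ldots,g_k\in\OO(\C^N)\) vanishing on \(G\) with no common zero on \(\overline W\).
\item The sheaf map \((a_1,\ldots,a_k)\mapsto\sum_j a_jg_j\) from \(\OO_W^k\) to \(\OO_W\) is surjective with coherent kernel. Theorem~B on \(W\) therefore gives \(a_j\in\OO(W)\) with \(\sum_j a_jg_j\equiv M+1\).
\item Oka--Weil approximates each \(a_j\) uniformly on \(P\) by polynomials \(\tilde a_j\). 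Then \(g=\sum_j\tilde a_jg_j\) vanishes on \(G\) and satisfies \(|g-(M+1)|<1\) on \(P\).
\end{enumerate}
With this \(g\), your case (b) via \(\tilde h\,e^{-g}\) is correct.

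Your case (a) needs even less. Theorem~A gives \(g_0\in\OO(\C^N)\) with \(g_0|_G=0\) and \(g_0(x)\neq0\). Take a polynomial \(f\) with \(f(x)=1>\sup_P|f|\). Then \(f^mg_0\) vanishes on \(Q\), and for large \(m\) it satisfies \(\sup_P|f^mg_0|<|g_0(x)|\). So the Oka--Weil detour through \(P\cup(G\cap\overline\Ball_R)\) should simply be dropped.
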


Notice that Lemma~\ref{lem:union} requires \(P\cap G=\varnothing\), not only \(P\cap Q=\varnothing\).

Write \(w=(w_1,\ldots,w_n)\) for the coordinates on \(X\). For \(k=(k_1,\ldots,k_n)\in\Z^n\), write \(w^k\coloneqq \prod_{j=1}^n w_j^{k_j}\), and set
\begin{equation}\label{eq:D-operators}
\mathcal D_j\coloneqq \ii w_j\frac{\partial}{\partial w_j},\qquad j=1,\ldots,n.
\end{equation}
These differential operators act in the quotient variable \(w\) and commute. On a product \(B\times X\), they are vertical for \(\operatorname{pr}_B:B\times X\to B\). For a multi-index \(\beta=(\beta_1,\ldots,\beta_n)\in\Z_{\geq0}^n\), we set \(\mathcal D^\beta\coloneqq \mathcal D_1^{\beta_1}\cdots\mathcal D_n^{\beta_n}\). A scalar function pulled back from \(X\) is \(\Gamma\)-periodic, and
\begin{equation}\label{eq:pullback-derivative}
\frac{\partial}{\partial z_j}(H\circ q) =(\mathcal D_j H)\circ q.
\end{equation}

\begin{lemma}\label{lem:laurent}
Let \(B\) be a Stein manifold, \(S\subset B\times X\) be compact and \(\OO(B\times X)\)-convex, and \(H\) be holomorphic on an open neighborhood \(\mathcal N\) of \(S\). Then there is a compact neighborhood \(C_0\) of \(S\) contained in \(\mathcal N\) with the following property: for every \(\eta>0\), there is a finite Laurent sum
\begin{equation}\label{eq:Laurent-sum}
P(b,w)\coloneqq \sum_{k\in\Lambda}c_k(b)w^k, \qquad\Lambda\subset\Z^n\text{ finite},\quad c_k\in\OO(B),
\end{equation}
such that
\begin{equation}\label{eq:Laurent-jets}
\max_{|\beta|\leq2}\sup_{C_0} \left|\mathcal D^\beta(P-H)\right|<\eta.
\end{equation}
\end{lemma}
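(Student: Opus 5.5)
Choose compact \(\OO(B\times X)\)-convex neighborhoods \(S\subset\Int C_0\subset C_0\subset\Int C_1\subset C_1\subset\mathcal N\), as provided in Section~\ref{subsec:stein-background}; this is possible because \(B\times X\) is Stein. Then prove \eqref{eq:Laurent-jets} in two steps. First approximate \(H\) uniformly on \(C_1\) by a holomorphic function on all of \(B\times X\). Then approximate that function by a finite Laurent sum on a slightly larger compact set. Finally convert sup-norm closeness on \(C_1\) into closeness of the derivatives \(\mathcal D^\beta\), \(|\beta|\le2\), on \(C_0\) by Cauchy estimates. The neighborhood \(C_0\) depends only on \(S\) and \(\mathcal N\), not on \(\eta\), as the statement requires.

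\emph{Step 1 (Oka--Weil).} Since \(C_1\) is \(\OO(B\times X)\)-convex and \(H\) is holomorphic near \(C_1\), the Oka--Weil theorem gives \(G\in\OO(B\times X)\) with \(\sup_{C_1}|G-H|<\eta'\).

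\emph{Step 2 (Laurent expansion with holomorphic coefficients).} Choose a compact set \(K_B\subset B\) and an annular product \(R=\{\rho^{-1}\le|w_j|\le\rho\}\subset X\) with \(C_1\subset K_B\times R\). Then write the Laurent expansion in \(w\),
\[
G(b,w)=\sum_{k\in\Z^n}c_k(b)w^k,\qquad c_k(b)=\frac{1}{(2\pi\ii)^n}\int_{|w_j|=1}G(b,w)\,w^{-k}\frac{\dd w}{w}.
\]
Each \(c_k\) is holomorphic on all of \(B\) because we integrate in \(w\) only. The expansion converges uniformly on \(K_B\times R\), since \(G\) is holomorphic on \(B\times X\) and the series converges absolutely and locally uniformly on \(B\times X\); this follows from Cauchy estimates on a larger annulus \(\rho'>\rho\). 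Truncating to a finite set \(\Lambda\) therefore gives \(P\) with \(\sup_{C_1}|P-G|<\eta'\). Hence \(\sup_{C_1}|P-H|<2\eta'\), with \(c_k\in\OO(B)\), as required in \eqref{eq:Laurent-sum}.

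\emph{Step 3 (derivative control).} The operators \(\mathcal D_j=\ii w_j\partial_{w_j}\) are vertical with coefficients bounded on \(C_1\). Since \(C_0\subset\Int C_1\), there is \(\delta>0\) such that for \((b,w)\in C_0\) the polydisc in the \(w\)-variable of radius \(\delta\) about \(w\), with \(b\) fixed, lies in \(C_1\). The Cauchy estimates then give
\[
\max_{|\beta|\le2}\sup_{C_0}|\mathcal D^\beta(P-H)|\le M\sup_{C_1}|P-H|,
\]
with \(M\) depending only on \(\delta\) and on \(\sup_{C_1}|w|\). Taking \(\eta'=\eta/(2M)\) finishes the proof.

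The main point needing care is Step 2: that a global holomorphic function on \(B\times X\) is approximable by finite Laurent sums with coefficients holomorphic on \(B\). This follows from the coefficient integral formula together with locally uniform convergence, which I expect to be routine but is where the product structure is used. Everything else is the standard Oka--Weil plus Cauchy argument. Fixing \(C_0,C_1\) before \(\eta\) is what yields the independence of \(C_0\) from \(\eta\).
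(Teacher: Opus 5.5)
Your proof is correct and follows the paper's route. You fix nested \(\OO(B\times X)\)-convex neighborhoods \(C_0\subset\Int C_1\) before \(\eta\), apply Oka--Weil on \(C_1\), truncate the Laurent expansion (with coefficients in \(\OO(B)\), converging geometrically by Cauchy estimates on a slightly larger annular product), and use fiberwise Cauchy estimates. The only difference is bookkeeping. You truncate in sup norm on \(C_1\) and apply a single Cauchy estimate to \(P-H\), whereas the paper applies Cauchy estimates only to the Oka--Weil error and controls the truncation error in each \(\mathcal D^\beta\) directly, using the factors \(\prod_j(\ii k_j)^{\beta_j}\) against the geometric decay of the terms.
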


\begin{proof}
By \cite[Proposition~2.5.1, Corollary~2.5.3, and Proposition~2.5.5]{For17}, choose compact \(\OO(B\times X)\)-convex neighborhoods \(C_0,C_1\), independently of \(\eta\), so that
\begin{equation}\label{eq:nested-neighborhoods}
S\subset\Int C_0\subset C_0\subset\Int C_1 \subset C_1\subset\mathcal N.
\end{equation}
By the Oka--Weil theorem \cite[Theorem~2.3.1]{For17}, \(H\) can be approximated uniformly on \(C_1\) by a function \(h\in\OO(B\times X)\). Cauchy estimates in finitely many product coordinate charts show that the error in the derivatives \(\mathcal D^\beta\), \(|\beta|\leq2\), on \(C_0\) can be made smaller than \(\eta/2\). The constants in these estimates depend only on the fixed nested neighborhoods and charts.

The global function \(h\) has a Laurent expansion
\begin{equation}\label{eq:global-laurent}
h(b,w)=\sum_{k\in\Z^n}c_k(b)w^k,
\end{equation}
where
\begin{equation}\label{eq:laurent-coefficients}
c_k(b)=\frac{1}{(2\pi\ii)^n}\int_{|v_1|=1}\!\cdots\!\int_{|v_n|=1}\frac{h(b,v_1,\ldots,v_n)}{\prod_{j=1}^n v_j^{k_j+1}}\,\dd v_n\cdots\dd v_1.
\end{equation}
Local differentiation under the integral shows that the coefficients are holomorphic in \(b\). They are independent of the positive integration radii.

To verify uniform convergence, project a given compact subset of \(B\times X\) to a compact subset of \(B\) and enclose its \(w\)-projection in a product of closed annuli. Choose slightly smaller inner radii and slightly larger outer radii. In \eqref{eq:laurent-coefficients}, use an outer radius when the corresponding exponent is nonnegative and an inner radius when it is negative. The resulting Cauchy estimates bound the terms by
\[
C\,\prod_{j=1}^n\theta_j^{|k_j|},\qquad 0<\theta_j<1,
\]
uniformly on the prescribed compact set. Application of \(\mathcal D^\beta\) multiplies a term by \(\prod_{j=1}^n(\ii k_j)^{\beta_j}\), which preserves summability. Thus the series and the indicated derivatives converge normally. A finite truncation of \eqref{eq:global-laurent} approximates \(h\) with these derivatives on \(C_0\) to within \(\eta/2\). Combining the two approximations proves the claim. The neighborhoods in \eqref{eq:nested-neighborhoods} depend only on \(S\) and \(\mathcal N\), so the same neighborhoods can be used for any prescribed finite family of scalar functions on \(\mathcal N\).
\end{proof}

The scalar approximation on the Stein quotient will be converted into automorphisms using complete \(\Gamma\)-periodic coordinate-pair fields. The following flows are lifts of special cases of Anders\'en's multiplicative shears \cite[equation~(1)]{And00}. We record their formulas and holomorphic parameter dependence for use below.

\begin{lemma}\label{lem:complete}
Let \(B\) be a complex manifold, \(c\in\OO(B)\), \(k\in\Z^n\setminus\{0\}\), and \(1\leq i<j\leq n\). The potential \(H_{ij}(b,z)=c(b)e^{\ii k\cdot z}\), with the other pair potentials zero, gives the complete vertical field
\begin{equation}\label{eq:mode-field}
V_{ij,k}(b,z)=\ii c(b)e^{\ii k\cdot z}v_{ij,k},
\qquad v_{ij,k}\coloneqq k_je_i-k_ie_j.
\end{equation}
For every complex time \(\tau\), its flow is
\begin{equation}\label{eq:mode-flow}
\Phi_{ij,k,\tau}^{b}(z)
=z+\tau\ii c(b)e^{\ii k\cdot z}v_{ij,k}.
\end{equation}
This flow is holomorphic in \((b,\tau,z)\), is an automorphism in \(z\), has Jacobian determinant one, and satisfies
\begin{equation}\label{eq:mode-equivariance}
\Phi_{ij,k,\tau}^{b}(z+\gamma)
=\Phi_{ij,k,\tau}^{b}(z)+\gamma,\qquad\gamma\in\Gamma.
\end{equation}
\end{lemma}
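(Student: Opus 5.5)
The plan is a direct computation from the definition \eqref{eq:pair-field}. With only \(H_{ij}=c(b)e^{\ii k\cdot z}\) nonzero, we have \(\partial H_{ij}/\partial z_l=\ii k_l c(b)e^{\ii k\cdot z}\). Substituting gives
\[
V_H=\ii c(b)e^{\ii k\cdot z}\bigl(k_j\partial_{z_i}-k_i\partial_{z_j}\bigr),
\]
which is \eqref{eq:mode-field}. The field is vertical because it has no \(b\)-component. The key observation is the identity
\[
k\cdot v_{ij,k}=k_ik_j-k_jk_i=0.
\]
It follows that the function \(e^{\ii k\cdot z}\) is a first integral: its derivative along \(V_{ij,k}\) equals \(\ii k\cdot V_{ij,k}\,e^{\ii k\cdot z}=0\). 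Along any integral curve the coefficient \(\ii c(b)e^{\ii k\cdot z}\) is therefore constant, and the curve is a straight line with constant velocity.

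To make this rigorous, I will simply define \(\Phi_\tau\) by \eqref{eq:mode-flow} and check it directly. First, \(k\cdot\Phi_\tau(z)=k\cdot z\), again because \(k\cdot v_{ij,k}=0\). Hence \(\frac{\dd}{\dd\tau}\Phi_\tau(z)=\ii c(b)e^{\ii k\cdot z}v_{ij,k}=V_{ij,k}(b,\Phi_\tau(z))\), and \(\Phi_0=\id\). By uniqueness of solutions, this is the flow, and it is defined for all complex \(\tau\), so the field is complete.

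The same identity gives the group law \(\Phi_\sigma\circ\Phi_\tau=\Phi_{\sigma+\tau}\). In particular \(\Phi_{-\tau}\) is a holomorphic inverse, so each \(\Phi_\tau\) is an automorphism. Holomorphy in \((b,\tau,z)\) is evident from the formula.

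For the Jacobian, write
\[
D_z\Phi_\tau=I_n+\tau\ii c(b)e^{\ii k\cdot z}\,v_{ij,k}\,(\ii k)^{T}.
\]
This is a rank-one perturbation of the identity, so its determinant is \(1+\tau\ii c\,e^{\ii k\cdot z}\,\ii\,k\cdot v_{ij,k}=1\). Alternatively, one can note that \(\operatorname{div}V_H=0\).

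Equivariance \eqref{eq:mode-equivariance} holds because \(e^{\ii k\cdot(z+\gamma)}=e^{\ii k\cdot z}\) for \(\gamma\in2\pi\Z^n\) and \(k\in\Z^n\). There is no real obstacle here. The only point requiring care is the orthogonality \(k\cdot v_{ij,k}=0\), which makes the flow explicit. The hypothesis \(k\neq0\) is not needed for this computation; for \(k=0\) the field is a constant translation.
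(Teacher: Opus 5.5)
Your proposal is correct and takes essentially the same route as the paper. You compute the field from the pair potential, use \(k\cdot v_{ij,k}=0\) to see that \(e^{\ii k\cdot z}\) is invariant, so the explicit formula solves the flow equation for all complex \(\tau\) with inverse \(\Phi_{-\tau}\), and finish with the rank-one determinant identity and \(e^{\ii k\cdot\gamma}=1\). One small correction to your closing aside: for \(k=0\) one has \(v_{ij,0}=0\), so the field vanishes identically (as the paper notes for constant Laurent terms) rather than being a nonzero translation.
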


\begin{proof}
The formula for the vector field follows by differentiation. Since \(k\cdot v_{ij,k}=0\), the function \(e^{\ii k\cdot z}\) is constant along the proposed trajectory. Hence \eqref{eq:mode-flow} solves the flow equation for every \(\tau\in\C\). The inverse is \(\Phi_{ij,k,-\tau}^{b}\). Its derivative is
\[
I_n-\tau c(b)e^{\ii k\cdot z}v_{ij,k}k^{\mathsf T}.
\]
Since \(\det(I+uv^{\mathsf T})=1+v^{\mathsf T}u\) and \(k^{\mathsf T}v_{ij,k}=0\), this derivative has determinant one. Finally, \(e^{\ii k\cdot\gamma}=1\) for \(\gamma\in2\pi\Z^n\), which gives \eqref{eq:mode-equivariance}.
\end{proof}

On the quotient the only changed coordinates are
\[
w_i\longmapsto w_i\exp(-\tau c(b)k_jw^k),\qquad
w_j\longmapsto w_j\exp(\tau c(b)k_iw^k).
\]
For \(n=2\), these are exactly the cited shears. In higher dimension, the remaining coordinates are fixed and enter as holomorphic parameters in their coefficients.

A constant term in any Laurent expansion contributes the zero vector field. Terms with \(k_i=k_j=0\) also contribute zero and may be omitted. Thus a field obtained from finitely many pulled-back Laurent polynomials is a finite sum of the complete fields in Lemma~\ref{lem:complete}. The next lemma is the quantitative finite-composition step used in Anders\'en--Lempert approximation \cite{AL92,FR93,Kut05}. The parameter-uniform first-derivative estimate is needed on a fixed neighborhood of the section.

For the next lemma and its proof, let \(I\subset\C\) be an open neighborhood of \([0,1]\), \(V\) be a complex manifold, and
\[
Y_j:I\times V\times\C^d\longrightarrow\C^d, \qquad j=1,\ldots,\ell,
\]
be jointly holomorphic vertical vector fields. Assume each frozen field \(Y_j(s,\zeta,\cdot)\) is complete, with flow \(\phi_{j,\tau}^{s,\zeta}\) jointly holomorphic in \((s,\zeta,\tau,z)\in I\times V\times\C\times\C^d\). The evolution considered below uses only real time \(t\in[0,1]\).

Set \(Y\coloneqq \sum_{j=1}^{\ell}Y_j\). For \(N\geq1\), set \(\Delta\coloneqq 1/N\), \(t_j\coloneqq j/N\), and
\begin{align}
B_{j,N,\zeta} &\coloneqq \phi_{\ell,\Delta}^{t_j,\zeta}\circ\cdots\circ \phi_{1,\Delta}^{t_j,\zeta},\label{eq:split-step}\\
A_{N,\zeta} &\coloneqq B_{N-1,N,\zeta}\circ\cdots\circ B_{0,N,\zeta}. \label{eq:split-global}
\end{align}

\begin{lemma}\label{lem:splitting}
Under the preceding assumptions, let \(L\subset V\) and \(E\subset L\times\C^d\) be compact. Suppose the nonautonomous equation
\begin{equation}\label{eq:nonautonomous-general}
\dot z(t)=Y(t,\zeta,z(t))
\end{equation}
has a solution for every initial pair \((\zeta,z(0))\in E\) on \([0,1]\), with the union of these trajectories contained in a compact set. Denote its time-one map, on a neighborhood of these initial points, by \(\Theta_\zeta\).

Then \(A_{N,\zeta}\) is a holomorphic family of automorphisms, and
\begin{equation}\label{eq:split-error}
\sup_{(\zeta,z)\in E} \left( |A_{N,\zeta}(z)-\Theta_\zeta(z)| +\norm{D_zA_{N,\zeta}(z)-D_z\Theta_\zeta(z)} \right)\longrightarrow0.
\end{equation}
In fact, the error is \(O(N^{-1})\).
\end{lemma}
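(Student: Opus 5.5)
This is the standard Lie--Trotter/Euler splitting estimate; the plan is local truncation error plus discrete Gronwall, with a compactness argument that keeps all iterates in a fixed compact set. Holomorphy and the automorphism property are immediate. Each \(\phi_{j,\Delta}^{t_k,\zeta}\) is an automorphism of \(\C^d\) with inverse \(\phi_{j,-\Delta}^{t_k,\zeta}\), and it is jointly holomorphic in \((\zeta,z)\). Hence \(A_{N,\zeta}\) and its inverse are holomorphic families of automorphisms. Only the estimate \eqref{eq:split-error} needs work.

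\emph{Setup.} Let \(M\subset\C^d\) be a compact set containing all trajectories \(z(t)\), \(t\in[0,1]\), for initial data in \(E\). Fix \(\rho>0\) and set \(M_\rho\coloneqq\{z:\dist(z,M)\leq\rho\}\); enlarge \(L\) slightly to a compact \(L'\) if needed. By joint holomorphy, on \([0,1]\times L'\times M_{2\rho}\) the fields \(Y_j\), their first and second \(z\)-derivatives, and their \(s\)-derivatives are bounded. Likewise, by joint holomorphy and compactness of \(\{|\tau|\leq1\}\), the flows \(\phi_{j,\tau}\) and their \(\tau\)- and \(z\)-derivatives up to order two are bounded for \(|\tau|\leq1\) and \(z\in M_{2\rho}\).

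\emph{One-step estimate.} Let \(\Psi_{s\to s+\Delta}\) be the evolution of \eqref{eq:nonautonomous-general}. Taylor expansion in \(\Delta\) gives
\[
\phi_{j,\Delta}^{s,\zeta}(z)=z+\Delta Y_j(s,\zeta,z)+O(\Delta^2).
\]
Composing the \(\ell\) flows gives \(B\) with \(B(z)=z+\Delta Y(s,\zeta,z)+O(\Delta^2)\). The true evolution has the same expansion \(\Psi_{s\to s+\Delta}(z)=z+\Delta Y(s,\zeta,z)+O(\Delta^2)\). The \(O(\Delta^2)\) constants are uniform on \(L'\times M_\rho\). By the same argument applied to the variational equations, or by Cauchy estimates on the slightly larger set \(M_{2\rho}\), the \(z\)-derivatives differ by \(O(\Delta^2)\) as well. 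Both maps are also Lipschitz on \(M_\rho\) with constant \(1+K\Delta\).

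\emph{Global estimate.} Write the error after \(k\) steps as a telescoping sum of one-step errors, each propagated by the remaining true evolutions. This uses the exact evolution property \(\Theta=\Psi_{t_{N-1}\to1}\circ\cdots\circ\Psi_{0\to t_1}\). The bound is
\[
|e_k|\leq \sum_{i<k}(1+K\Delta)^{k-i}C\Delta^2\leq Ce^{K}\Delta.
\]
Induction on \(k\) shows that the numerical iterates stay within \(\rho\) of the true trajectory, hence inside \(M_\rho\), once \(N\) is large enough that \(Ce^K/N<\rho\). This bootstrap is the main obstacle: the one-step constants apply only inside the compact neighbourhood, so confinement must be established together with the error bound.

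\emph{Derivative estimate.} For the first derivatives, either repeat the telescoping argument for \(D_z\) using the chain rule and the second-derivative bounds, or apply Cauchy estimates. The Cauchy route works as follows. Apply the position estimate on a compact neighbourhood \(E'\) of \(E\) whose trajectories still lie in \(M\); this is possible after shrinking, since the solution map is continuous and the initial points form an open set. Then the uniform \(O(N^{-1})\) position bound on \(E'\) yields an \(O(N^{-1})\) bound for \(D_zA_{N,\zeta}-D_z\Theta_\zeta\) on \(E\). Adding the two bounds gives \eqref{eq:split-error}.
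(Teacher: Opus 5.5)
Your proposal is correct. The position estimate follows the paper's argument: a Taylor expansion of the frozen flows gives a one-step error $O(\Delta^2)$, a discrete Gronwall bound follows, and a first-exit bootstrap keeps the iterates in a buffered compact set. The only variation there is that you propagate local errors by the exact evolutions in a telescoping sum, whereas the paper propagates them by the split steps through $e_{j+1}\le(1+C_2\Delta)e_j+C_1\Delta^2$. In your version the confinement must therefore also cover the exact trajectories issuing from the perturbed points, which Gronwall provides.

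The genuine difference is the derivative estimate. The paper adjoins a matrix variable $J$ and notes that the lifted fields $(Y_j,D_zY_j\,J)$ have the complete flows $(\phi_{j,\tau},D_z\phi_{j,\tau}J)$. It bounds $J$ along the variational equation by Gronwall and reruns the same error argument on the lifted space. You instead prove the position bound on a compact thickening $E'=\{(\zeta,z+u):(\zeta,z)\in E,\ |u|\le r\}$, whose trajectories stay in a compact set by continuous dependence. You then apply Cauchy estimates on complex lines to the holomorphic map $A_{N,\zeta}-\Theta_\zeta$, which gives $\norm{D_z(A_{N,\zeta}-\Theta_\zeta)}\le r^{-1}\sup_{E'}|A_{N,\zeta}-\Theta_\zeta|=O(N^{-1})$ on $E$.

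Your route is shorter, reuses the position bound verbatim, and controls all higher derivatives at once. However, it depends on holomorphy of $\Theta_\zeta$ in $z$ and on a fixed margin of initial data around $E$. The paper's lifting is purely real-variable and only needs the initial data $E\times\{I_d\}$.
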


\begin{proof}
Each composition in \eqref{eq:split-global} is an automorphism and depends holomorphically on the parameter. We compare it with the exact nonautonomous flow near the prescribed compact family of trajectories.

For the one-step estimate, choose a compact neighborhood of the prescribed trajectories with a positive spatial buffer. On this neighborhood, the finitely many fields and their derivatives needed below are uniformly bounded in \(t\) and \(\zeta\). Local existence and continuous dependence for ordinary differential equations give a common neighborhood of the initial compact set on which the exact solutions exist up to time one. All estimates below are made on a slightly smaller buffered neighborhood.

For sufficiently small \(\Delta\), Taylor's formula for a frozen flow gives, uniformly there,
\[
\phi_{j,\Delta}^{t,\zeta}(z) =z+\Delta Y_j(t,\zeta,z)+O(\Delta^2).
\]
Composing a fixed finite number of these formulas gives
\begin{equation}\label{eq:one-step-expansion}
B_{j,N,\zeta}(z) =z+\Delta Y(t_j,\zeta,z)+O(\Delta^2).
\end{equation}
The exact nonautonomous transition from \(t_j\) to \(t_j+\Delta\) has the same expansion, because the field is continuously differentiable in time. Its one-step difference from \eqref{eq:one-step-expansion} is therefore at most \(C_1\Delta^2\). The derivative of the split step is bounded by \(1+C_2\Delta\).

Let \(e_j\) be the maximal error after \(j\) steps. As long as the approximate trajectory stays in the buffered neighborhood, it satisfies
\begin{equation}\label{eq:error-recursion}
e_{j+1}\leq(1+C_2\Delta)e_j+C_1\Delta^2, \qquad e_0=0.
\end{equation}
Iterating the recurrence and using \(j\Delta\leq1\) gives
\[
e_j\leq C_1\Delta^2\sum_{r=0}^{j-1}(1+C_2\Delta)^r \leq C\Delta\qquad(0\leq j\leq N),
\]
where \(C\) is independent of \(N\) and the initial pair. Taking \(N\) sufficiently large makes this bound smaller than the buffer. A first-exit argument therefore gives the estimate at every step. Each individual intermediate frozen flow also remains there for large \(N\), since its displacement on that region is \(O(\Delta)\).

For the derivative estimate, adjoin the matrix variable \(J\in\operatorname{Mat}_{d\times d}(\C)\) and use the lifted fields
\[
\widetilde Y_j(t,\zeta,z,J) \coloneqq \bigl(Y_j(t,\zeta,z),\ D_zY_j(t,\zeta,z)J\bigr).
\]
Their complete flows are
\[
(z,J)\longmapsto \bigl(\phi_{j,\tau}^{t,\zeta}(z),\ D_z\phi_{j,\tau}^{t,\zeta}(z)J\bigr).
\]
The exact lifted equation is the original equation together with its variational equation. Starting with \(J=I_d\), its trajectories are uniformly bounded, by Gronwall's inequality and the bound on \(D_zY\). The preceding error argument on the lifted space therefore proves the \(O(\Delta)\) estimate for both the map and its derivative. This proves \eqref{eq:split-error}.
\end{proof}

We apply Lemma~\ref{lem:splitting} at one sufficiently large finite \(N\). Thus the approximating map is an automorphism, regardless of whether the sum of the fields is complete.

\subsection{Periodic transport}\label{subsec:transport-tube}

\begin{proof}[Proof of Lemma~\ref{lem:transport}]
Set \(B=\C\times U\), \(J=[0,1]\times L\), and \(Z=B\times X\). In the Stein manifold \(Z\), consider
\[
P=J\times C,\qquad
G=\{(s,\zeta,q(p(s,\zeta))):(s,\zeta)\in B\},\qquad
Q_0=G|_J.
\]
The compact convex set \(J\) is \(\OO(B)\)-convex, so \(P\) is \(\OO(Z)\)-convex. The graph \(G\) is a closed complex submanifold biholomorphic to \(B\), and \(Q_0\) is \(\OO(G)\)-convex. A point of \(P\cap G\) would have \((s,\zeta)\in J\) and \(p(s,\zeta)\in E\), contrary to the avoidance hypothesis. Lemma~\ref{lem:union} therefore shows that \(S=P\cup Q_0\) is \(\OO(Z)\)-convex.

\smallskip
\noindent\textbf{Step 1: Construction and approximation of the local potentials.}
Near \(Q_0\), use the branch of the logarithm near \(1\) with \(\operatorname{Log}1=0\), and set
\[
\eta_j(s,\zeta,w)=\frac1{\ii}\operatorname{Log}\bigl(w_je^{-\ii p_j(s,\zeta)}\bigr),
\qquad j=1,\ldots,n.
\]
On disjoint neighborhoods of \(P\) and \(Q_0\), define, for \(i<j\),
\begin{equation}\label{eq:local-pair-potentials}
\mathcal H_{ij}(s,\zeta,w)=
\begin{cases}
0,&\text{near }P,\\[2pt]
\displaystyle\frac{\partial_s p_i(s,\zeta)\eta_j(s,\zeta,w)-\partial_s p_j(s,\zeta)\eta_i(s,\zeta,w)}{n-1},&\text{near }Q_0.
\end{cases}
\end{equation}
After pullback by \(q\), the field \eqref{eq:pair-field} vanishes near \(E\) and equals \(\partial_s p(s,\zeta)\) near \(p(s,\zeta)\), by \eqref{eq:translation-potentials}. The neighborhoods are chosen so that all these scalar functions are holomorphic on the same open neighborhood of \(S\).

Apply Lemma~\ref{lem:laurent} to this finite family, obtaining a common fixed compact neighborhood \(K_0\) inside its domain. Compactness of \(P\) and \(Q_0\) gives \(\rho,\sigma>0\) such that, for every \((t,\zeta)\in J\), the quotient images of
\[
\{p(t,\zeta)+u:|u|\leq\rho\}
\quad\text{and}\quad
\{z:\dist(z,E)\leq\sigma\}
\]
lie in the corresponding fiber of \(K_0\), in the translation and zero regions, respectively. For the second set this follows from compactness modulo \(\Gamma\): restrict the real parts to a closed fundamental box and use periodicity. Decrease \(\rho\) so that \(\operatorname{Log}(e^{\ii u_j})=\ii u_j\) on these balls, and set \(r=\rho/2\). These choices are independent of the approximation error.

For any \(\alpha>0\), approximate each \(\mathcal H_{ij}\) by
\[
\mathcal P_{ij}(s,\zeta,w)=\sum_{k\in\Lambda_{ij}}c_{ij,k}(s,\zeta)w^k,
\qquad \Lambda_{ij}\subset\Z^n\text{ finite},\quad c_{ij,k}\in\OO(B).
\]
To make the dimension dependence explicit, if the scalar errors in all derivatives \(\mathcal D^\beta\), \(|\beta|\leq2\), are at most \(\eta\), then each component of the field error is at most \((n-1)\eta\), and each entry of its derivative matrix is at most \((n-1)\eta\). The Euclidean norm and operator norm are consequently bounded by \(\sqrt n(n-1)\eta\) and \(n(n-1)\eta\), respectively. Choose
\[
\eta<\frac{\alpha}{(\sqrt n+n)(n-1)}.
\]
Then the field
\[
W(t,\zeta,z)=V_{(\mathcal P_{ij}(t,\zeta,q(z)))_{i<j}}(z)
\]
satisfies
\begin{equation}\label{eq:transport-zero}
|W(t,\zeta,z)|+\norm{D_zW(t,\zeta,z)}<\alpha
\qquad\bigl(\dist(z,E)\leq\sigma\bigr),
\end{equation}
\begin{equation}\label{eq:transport-moving}
\begin{split}
&|W(t,\zeta,p(t,\zeta)+u)-\partial_t p(t,\zeta)|\\
&\qquad+\norm{D_zW(t,\zeta,p(t,\zeta)+u)}<\alpha
\qquad(|u|\leq\rho),
\end{split}
\end{equation}
uniformly for \((t,\zeta)\in J\). The first derivatives of the potentials control the field and the second derivatives control \(D_zW\), by \eqref{eq:pullback-derivative}. Periodicity gives \eqref{eq:transport-zero} on the entire indicated neighborhood of \(E\).

\smallskip
\noindent\textbf{Step 2: Control of the local flow.}
Take \(\alpha<\min\{\sigma/2,\rho/4,1\}\). For a solution of \(\dot z=W(t,\zeta,z)\) starting in \(E\), integration up to a hypothetical first exit from its \(\sigma\)-neighborhood gives \(|z(t)-z(0)|\leq\alpha t\), so exit before time one is impossible. For a solution starting at \(p(0,\zeta)+u\), \(|u|\leq r\), subtract the reference path \(p(t,\zeta)+u\). Equation~\eqref{eq:transport-moving} gives
\[
|z(t)-p(t,\zeta)-u|\leq\alpha t.
\]
Since \(r+\alpha<3\rho/4\), this solution cannot leave the moving ball of radius \(\rho\). Each solution stays in a bounded neighborhood of its reference path and therefore exists up to time one.

Write \(\Theta_{t,\zeta}\) for these local flow maps. The strict margins also give local existence for initial points in a neighborhood of the closed sets under consideration. Their variational equation and~\eqref{eq:transport-moving} imply
\[
\norm{D_z\Theta_{1,\zeta}(p(0,\zeta)+u)-I_n}
\leq e^\alpha-1,
\qquad |u|\leq r,\quad\zeta\in L.
\]

\smallskip
\noindent\textbf{Step 3: Approximation of the local flow by global automorphisms.}
Every nonconstant Laurent mode has the complete lifted flow in Lemma~\ref{lem:complete}. Apply Lemma~\ref{lem:splitting} to these finitely many modes, taking the compact set of initial pairs
\[
(L\times D_E)\ \cup\
\{(\zeta,p(0,\zeta)+u):\zeta\in L,\ |u|\leq r\},
\]
where
\[
D_E=\{z\in E:0\leq\re z_j\leq2\pi,\ j=1,\ldots,n\}.
\]
The set \(D_E\) is compact because \(C\subset X\) is compact. The preceding bounds and continuous dependence of the flow show that the exact trajectories form a compact family. Lemma~\ref{lem:splitting} therefore gives a finite composition \(A_\zeta\) approximating \(\Theta_{1,\zeta}\), together with its \(z\)-derivative, uniformly on these initial pairs. Uniqueness makes the exact flow equivariant wherever defined, and every approximating factor is equivariant. Thus the approximation on \(D_E\) extends to all of \(E\).

Given \(\epsilon>0\), first choose \(\alpha\) so that \(\alpha<\epsilon/3\) and \(e^\alpha-1<\epsilon/3\), and fix the corresponding Laurent polynomial. Then choose the finite subdivision in Lemma~\ref{lem:splitting} so that the map and derivative errors are less than \(\epsilon/3\). Combining these estimates gives all three conclusions. Equivariance, determinant one, and holomorphic dependence of the map and its inverse follow from the corresponding properties of the complete mode flows.
\end{proof}

For the standard volume form \(\omega=\dd z_1\wedge\cdots\wedge\dd z_n\), the contraction \(\iota_{V_H}\omega\) is exact in the fiber variable: it is the fiber differential of
\[
\sum_{i<j}(-1)^{i+j-3}H_{ij}\,
\dd z_1\wedge\cdots\wedge\widehat{\dd z_i}\wedge\cdots\wedge\widehat{\dd z_j}\wedge\cdots\wedge\dd z_n.
\]

\section{Attracting basins and periodic complements}\label{sec:basins}

We first establish connectedness of the periodic complements and prepare a fixed attracting basin and holomorphic paths. These ingredients, together with periodic transport, prove Theorem~\ref{thm:periodic-complement}; the quotient complement is then shown to be Oka. Finally, we construct families for positive-width rectangular tubes that make the original tube strictly forward invariant, using parameter-uniform basin coordinates.

A fixed point \(p\) of \(g\in\Aut(\C^d)\) is \emph{attracting} if every eigenvalue of \(Dg(p)\) has modulus less than one. Its \emph{attracting basin} is \(\{z\in\C^d:g^k(z)\to p\text{ as }k\to\infty\}\). A set \(E\subset\C^d\) is \emph{forward invariant} under \(g\) if \(g(E)\subset E\).

\subsection{Geometric preparations}\label{subsec:basin-models}

\begin{lemma}\label{lem:periodic-connectedness}
For \(n\geq2\), let \(C\subset X=(\C^*)^n\) be compact and \(\OO(X)\)-convex. Then both \(X\setminus C\) and \(\C^n\setminus q^{-1}(C)\) are path connected.
\end{lemma}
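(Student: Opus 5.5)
The plan is to prove path connectedness of \(X\setminus C\) first, and then lift it to \(\C^n\setminus q^{-1}(C)\) by a separate argument for the fibres of the covering.

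\textbf{Step 1: \(X\setminus C\) is path connected.} Since \(X\setminus C\) is an open subset of a manifold, it suffices to prove connectedness. Suppose \(X\setminus C=W_1\sqcup W_2\) with \(W_1,W_2\) nonempty and open. Since \(C\) is compact, the complement of a large compact exhaustion set \(\{R^{-1}\le |w_j|\le R\}\) lies in \(X\setminus C\). For \(n\ge2\) this complement is connected: it is the set of points with at least one coordinate of modulus outside \([R^{-1},R]\), and any two such points can be joined through points where two coordinates are extreme. Hence one component, say \(W_2\), contains this outer region, and \(W_1\) is relatively compact in \(X\). Then \(\overline{W_1}\) is compact and \(\partial W_1\subset C\). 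For any \(h\in\OO(X)\), the maximum principle on the bounded open set \(W_1\) gives \(\sup_{W_1}|h|\le\sup_{\partial W_1}|h|\le\sup_C|h|\). Thus \(W_1\subset\widehat C_{\OO(X)}=C\), a contradiction.

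\textbf{Step 2: reduction for the covering.} Write \(Y=\C^n\setminus E\) with \(E=q^{-1}(C)\). Since \(q\colon Y\to X\setminus C\) is a covering map onto a path-connected space, \(Y\) is path connected if and only if the deck group \(\Gamma\) acts transitively on fibres within a single path component. By lifting paths, it suffices to fix a base point \(y_0\in Y\) and show that \(y_0\) can be joined to \(y_0+2\pi e_j\) inside \(Y\) for each \(j\).

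\textbf{Step 3: producing the loops.} Compactness of \(C\) gives \(M\) with \(\norm{\im z}_\infty\le M\) on \(E\). Choose \(y_0\) with \(\im (y_0)_1>M\). The horizontal segment \(y_0+2\pi t e_j\), \(t\in[0,1]\), keeps \(\im z_1\) fixed and therefore avoids \(E\). This is already enough, but I will note why dimension matters: in \(X\) these segments project to loops encircling the coordinate axes, and their existence in the complement is what the hull condition guarantees.

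\textbf{Main obstacle.} The only delicate point is Step 1, specifically the claim that the outer region is connected when \(n\ge2\). This fails for \(n=1\); for example, an annulus in \(\C^*\) is \(\OO(\C^*)\)-convex yet disconnects \(\C^*\). I would verify connectedness of \(\{w:\exists j,\ |w_j|\notin[R^{-1},R]\}\) directly. Each slab \(\{|w_j|>R\}\) is connected, being a product of connected sets. Moreover \(\{|w_1|>R\}\) and \(\{|w_1|<R^{-1}\}\) are joined through \(\{|w_2|>R\}\), which meets both of them.
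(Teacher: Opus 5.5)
Your proof is correct. Step 1 is essentially the paper's argument: the outer region is connected for \(n\geq2\), any other piece of \(X\setminus C\) is relatively compact with boundary in \(C\), and the maximum principle then contradicts \(\widehat C_{\OO(X)}=C\).

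For \(\C^n\setminus q^{-1}(C)\) you take a different route. The paper lifts a path from \(q(z)\) into \(X\setminus P_R\). It then observes that the preimage \(\C^n\setminus T_R\cong\R^n\times(\R^n\setminus[-R,R]^n)\) is path connected, which uses \(n\geq2\) a second time.

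You instead use path lifting to reduce to showing that one fibre \(y_0+\Gamma\) lies in a single path component. (This is the correct form of your somewhat garbled ``transitively on fibres'' criterion.) You then reduce further to the generators \(2\pi e_j\), and join \(y_0\) to \(y_0+2\pi e_j\) by real segments at imaginary height above \(M\). The passage from generators to the whole fibre uses that translations by \(\Gamma\) map \(Y\) homeomorphically onto itself, because \(E\) is \(\Gamma\)-invariant. You leave this implicit and should state it.

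Your version isolates the actual mechanism. The loops \(q(y_0+2\pi t e_j)\) show that \(\pi_1(X\setminus C)\to\pi_1(X)\cong\Z^n\) is onto. It also shows that the second assertion needs only connectedness of \(X\setminus C\) and compactness of \(C\), not \(n\geq2\). The paper's version is shorter and avoids the deck-group bookkeeping.

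One correction to your aside in Step 3: these loops lie in \(X\setminus C\) because \(C\) is compact, so \(\im z\) is bounded on \(E\). The hull condition plays no role there; it is used only in Step 1.
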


\begin{proof}
The assertion is immediate if \(C=\varnothing\), so assume \(C\neq\varnothing\). Choose \(R>0\) so large that \(C\) lies in the interior of the compact polyannulus
\[
P_R=q(T_R)=\{w\in X:e^{-R}\leq|w_j|\leq e^R,\ j=1,\ldots,n\}.
\]
Logarithmic polar coordinates identify \(X\setminus P_R\) with
\[
(\mathbb S^1)^n\times\bigl(\R^n\setminus[-R,R]^n\bigr),
\]
which is path connected because \(n\geq2\). Hence \(X\setminus P_R\) lies in one connected component of \(X\setminus C\). Any other component \(D\) would be contained in \(P_R\), so it would be relatively compact in \(X\), with \(\partial D\subset C\). The maximum modulus principle gives
\[
|h(w)|\leq\sup_C|h|\qquad(w\in D,\ h\in\OO(X)).
\]
Thus \(D\) lies in the \(\OO(X)\)-hull of \(C\), which equals \(C\), a contradiction. Therefore \(X\setminus C\) is connected and, being open in a manifold, is path connected.

Set \(Y=\C^n\setminus q^{-1}(C)\). For any \(z\in Y\), choose a path in \(X\setminus C\) from \(q(z)\) to a point of \(X\setminus P_R\). Its lift under the covering \(q:Y\to X\setminus C\), starting at \(z\), ends in
\[
q^{-1}(X\setminus P_R)=\C^n\setminus T_R
\cong\R^n\times\bigl(\R^n\setminus[-R,R]^n\bigr).
\]
This set is path connected because \(n\geq2\). Every point of \(Y\) can therefore be joined to the same path-connected subset of \(Y\), proving that \(Y\) is path connected.
\end{proof}

A Fatou--Bieberbach domain is a proper subdomain of \(\C^n\) biholomorphic to \(\C^n\); constructing one outside a tube therefore gives an entire parametrization whose image avoids that tube. In the proof of Theorem~\ref{thm:periodic-complement}, a holomorphic family of automorphisms will send the prescribed section to the center of this fixed model while keeping the image of the obstacle inside the tube. Pulling the model back and normalizing its parametrizations then gives the dominating sprays needed for the Oka property. This construction follows the strategy of Forstneri\v c and Wold \cite[Lemmas~3.2--3.3 and proof of Theorem~3.1]{FW20}. We construct the model as an attracting basin so that the basin theorem supplies its biholomorphism with \(\C^n\), while forward invariance of the closed tube guarantees that the basin avoids the tube when the attracting point lies outside it. Conjugating the dynamics also preserves the basin description in the resulting family.

\begin{lemma}\label{lem:tube}
Let \(n\geq2\). For every \(R>0\) and every real \(h>4R\), there is a Fatou--Bieberbach domain \(\Omega\subset\C^n\setminus T_R\) and a biholomorphism \(\Phi:\C^n\to\Omega\) with \(\Phi(0)=(\ii h,\ldots,\ii h)\) and \(D\Phi(0)=I_n\). The domain \(\Omega\) is the attracting basin of an automorphism \(\Psi\) fixing this point.
\end{lemma}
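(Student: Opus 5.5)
The plan is to build \(\Psi\) explicitly as a linear contraction composed with finitely many periodic shears, and to get \(\Phi\) from the basin theorem. Set \(p=(\ii h,\ldots,\ii h)\) and fix \(0<\lambda<1\), say \(\lambda=1/2\). I want \(\Psi\in\Aut(\C^n)\) with three properties: \(\Psi(p)=p\), \(D\Psi(p)=\lambda I_n\), and \(\Psi(T_R)\subset T_R\).

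Granting such a \(\Psi\), the conclusion follows quickly.
\begin{enumerate}[label=\textup{(\alph*)},leftmargin=*]
\item The basin \(\Omega\) of the attracting fixed point \(p\) is biholomorphic to \(\C^n\) by the Rosay--Rudin basin theorem. Because the linear part is the scalar \(\lambda I_n\), there is no resonance issue: the limit \(\lim_k \lambda^{-k}(\Psi^k-p)\) gives a biholomorphism \(\Omega\to\C^n\) with derivative \(I_n\) at \(p\). Its inverse is the required \(\Phi\), with \(\Phi(0)=p\) and \(D\Phi(0)=I_n\).
\item \(\Omega\) avoids \(T_R\). By forward invariance, every orbit starting in \(T_R\) stays in the closed set \(T_R\). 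That set lies at positive distance from \(p\), since \(h>R\), so no such orbit converges to \(p\).
\item \(\Omega\) is a proper domain because \(T_R\neq\varnothing\).
\end{enumerate}

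For the construction I take \(\Psi=\lambda\cdot(S_{n-1}\circ\cdots\circ S_1\circ S_n)\). Here \(S_j\) is the shear \(z_j\mapsto z_j+\beta_j(z_{j+1})\) with indices taken cyclically, and all other coordinates are fixed. Each \(S_j\) is an automorphism with determinant one.

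The order of the shears is chosen so that \(p\) is tracked explicitly.
\begin{enumerate}[label=\textup{(\arabic*)},leftmargin=*]
\item \(S_n\) is applied first, while \(z_1=\ii h\).
\item \(S_1,\ldots,S_{n-2}\) then read coordinates still equal to \(\ii h\).
\item Only \(S_{n-1}\) reads the already moved coordinate \(z_n\).
\end{enumerate}
I want every coordinate of \(p\) to end at \(\ii h/\lambda\), so that the factor \(\lambda\) returns it to \(p\). This requires \(\beta_j(\ii h)=\ii h(1/\lambda-1)\) for \(j\neq n-1\) and \(\beta_{n-1}(\ii h/\lambda)=\ii h(1/\lambda-1)\).

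I also require \(\beta_j'=0\) at the evaluation point. Then each \(DS_j\) is the identity at the relevant point, so \(D\Psi(p)=\lambda I_n\). A concrete choice is
\[
\beta(z)=a\,e^{-\ii N z}\bigl(1+\ii N(z-c)\bigr),
\]
where \(c\) is the evaluation point and \(a\) is fixed by the value condition. Hence \(|a|\approx h(1/\lambda-1)e^{-N\im c}\), with \(\im c\geq h\).

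The main obstacle is the forward invariance of \(T_R\), and it is handled by taking \(N\) large. On the set \(\{\norm{\im z}_\infty\le R'\}\) we have \(|e^{-\ii Nz}|\le e^{NR'}\). So for real parts arbitrary and \(\norm{\im z}_\infty\leq R'\),
\[
|\beta_j(z)|\lesssim h(1/\lambda-1)\,(1+N|z-c|)\,e^{N(R'-h)} .
\]
The linear factor is harmless after passing to a slightly larger exponent.

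Two details need care in this estimate.
\begin{itemize}[leftmargin=*]
\item The factor \(1+N|z-c|\) is unbounded in \(\re z\). I fix this by replacing \(\ii N(z-c)\) with a periodic correction such as \(\tfrac{\ii}{1}\bigl(e^{\ii(z-c)}-1\bigr)\cdot N\) adjusted to kill the derivative. Then \(\beta\) is \(2\pi\)-periodic in \(\re z\) and bounded on tubes.
\item The shears are applied in sequence, so later shears read coordinates that have already been moved slightly. I take \(R'=2R\). Since \(h>4R\), the exponent \(N(R'-h)\le -2NR\) still decays.
\end{itemize}
Choose \(N\) so large that every \(|\beta_j|\le \delta:=R(1/\lambda-1)/n\) on \(T_{2R}\). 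Then, starting in \(T_R\), each coordinate's imaginary part stays within \(R+\delta\le 2R\) through all the shears. After multiplication by \(\lambda\) it is at most \(\lambda(R+\delta)\le R\). Hence \(\Psi(T_R)\subset T_R\).
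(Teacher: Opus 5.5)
Your proof is correct, and it takes a genuinely different route from the paper.

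\textbf{The paper's route.} The paper does not build \(\Psi\) by hand. It applies the periodic transport Lemma~\ref{lem:transport}, which rests on Laurent approximation, the mode flows of Lemma~\ref{lem:complete}, and the splitting Lemma~\ref{lem:splitting}. Applied to the path \(s\mapsto(1+s)p_*\), it gives \(A\in\Aut(\C^n)\) with \(|A-\id|<\epsilon\) on \(T_R\), \(A(p_*)\approx 2p_*\) and \(DA(p_*)\approx I_n\). The paper then sets \(\Psi(z)=p_*+\tfrac12\bigl(A(z)-A(p_*)\bigr)\). The fixed point is exact, but the linear part \(\tfrac12 DA(p_*)\) is only approximately scalar, so the general Rosay--Rudin basin theorem is invoked.

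\textbf{Your route and what it buys.} Your \(n\) cyclic one-variable shears, ordered so that \(p\) is tracked exactly, give an exact fixed point with \(D\Psi(p)=\tfrac12 I_n\). The basin coordinate is then the elementary resonance-free limit \(\lambda^{-k}(\Psi^k-p)\), and only \(h>2R\) is needed. Your shears are themselves finite compositions of the complete mode flows of Lemma~\ref{lem:complete}, with \(k=-Me_{j+1}\) in the pair \(\{j,j+1\}\). So you choose by hand the finite composition that the paper obtains by approximation, with no Oka--Weil, Laurent truncation, or splitting step.

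\textbf{What you give up.} Your invariance estimate uses that \(T_R\) is a product of strips and that each shear reads a single coordinate. The paper's mechanism works for any periodic obstacle \(q^{-1}(C)\) with \(C\) compact and \(\OO(X)\)-convex. The paper needs Lemma~\ref{lem:transport} anyway for Theorem~\ref{thm:periodic-complement} and Proposition~\ref{prop:family}, so reusing it there costs nothing. For this lemma alone, your explicit construction is a valid and more elementary replacement.

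\textbf{Fix your formula for \(\beta\).} Your first choice is unbounded on the strip, because \(\re z\) is unbounded. Your claim that the linear factor is harmless is therefore false. The periodic correction as you wrote it, with coefficient \(\ii N\), does not kill the derivative; the coefficient must be \(N\). With \(\beta(z)=a e^{-\ii Nz}\bigl(1+N(e^{\ii(z-c)}-1)\bigr)\) one has \(\beta'(c)=0\). Since \(|e^{\ii(z-c)}|\leq e^{\im c+R'}\) on \(\{|\im z|\leq R'\}\),
\[
|\beta|\leq |v|\bigl(1+N(1+e^{\im c+R'})\bigr)e^{-N(\im c-R')}\longrightarrow0.
\]
A cleaner choice is
\[
\beta(z)=(N+1)v\,e^{-\ii N(z-c)}-Nv\,e^{-\ii(N+1)(z-c)}.
\]
It satisfies \(\beta(c)=v\), \(\beta'(c)=0\), and \(|\beta|\leq(2N+1)|v|e^{-N(\im c-R')}\) on the strip. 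With either choice, the rest of your argument goes through as written.
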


\begin{proof}
Write \(p_*=(\ii h,\ldots,\ii h)\). The set
\[
q( T_R)
=\{w\in X:e^{-R}\leq|w_j|\leq e^R,\ j=1,\ldots,n\}
\]
is \(\OO(X)\)-convex, as the functions \(w_j\) and \(w_j^{-1}\) separate every point outside it. Choose
\(0<\epsilon<\min\{R/4,1/10\}\).
The path \(p(s)=(1+s)p_*\) avoids \( T_R\) for \(s\in[0,1]\), so Lemma~\ref{lem:transport}, without parameters, gives \(A\in\Aut(\C^n)\) satisfying
\[
\sup_{ T_R}|A-\id|<\epsilon,\qquad
|A(p_*)-2p_*|<\epsilon,\qquad
\norm{DA(p_*)-I}<\epsilon.
\]
The automorphism
\[
\Psi(z)=p_*+\tfrac12\bigl(A(z)-A(p_*)\bigr)
\]
fixes \(p_*\). For \(z\in T_R\), the identity
\[
\Psi(z)=\tfrac12 z+\tfrac12\bigl(A(z)-z\bigr)
+\tfrac12\bigl(2p_*-A(p_*)\bigr)
\]
and the preceding estimates give
\[
\Psi( T_R)\subset T_{R/2+\epsilon}\subset T_R,
\qquad
\norm{D\Psi(p_*)}<\tfrac12(1+\epsilon)<\tfrac{11}{20}<1.
\]
All eigenvalues of \(D\Psi(p_*)\) consequently have modulus less than one. Its attracting basin
\[
\Omega=\{z\in\C^n:\Psi^\nu(z)\longrightarrow p_*\text{ as }\nu\longrightarrow\infty\}
\]
is biholomorphic to \(\C^n\) by Rosay and Rudin \cite[Appendix, pp.~84--85]{RR88}, after translating \(p_*\) to the origin. No orbit in the closed forward-invariant set \( T_R\) can converge to \(p_*\notin T_R\). Thus \(\Omega\) avoids \(T_R\). A translation in the source first normalizes \(\Phi(0)=p_*\); precomposition with the inverse of its derivative at \(0\) then gives \(D\Phi(0)=I_n\), without changing the image.
\end{proof}

To transport a parameter family to the fixed basin constructed above, we need a holomorphic path to its center. The following construction applies to any connected open complement. Only the real-time path is required to avoid the obstacle; its polynomial extension in complex time need not do so.

\begin{lemma}\label{lem:holomorphic-path}
Suppose \(E\subset\C^n\) is a nonempty closed set and \(Y=\C^n\setminus E\) is connected. Let \(U\subset\C^m\) be convex and open, \(L\Subset U\) be nonempty, compact, and convex, \(f:U\to Y\) be holomorphic, and \(p_*\in Y\). There is a holomorphic map \(p:\C\times U\to\C^n\), polynomial in its first variable, such that
\[
p(0,\zeta)=f(\zeta),\qquad p(1,\zeta)=p_*
\quad(\zeta\in U),
\]
and \(p(t,\zeta)\notin E\) for \(t\in[0,1]\) and \(\zeta\in L\).
\end{lemma}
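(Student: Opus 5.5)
We build \(p\) in two stages: first a continuous path, then a polynomial approximation in \(t\). Fix a base point \(\zeta_0\in L\). Since \(Y\) is a connected open subset of \(\C^n\), it is path connected. Choose a continuous path \(\gamma:[0,1]\to Y\) with \(\gamma(0)=f(\zeta_0)\) and \(\gamma(1)=p_*\). Define the continuous family
\[
c(t,\zeta)=
\begin{cases}
f\bigl(\zeta_0+(1-2t)(\zeta-\zeta_0)\bigr), & 0\leq t\leq \tfrac12,\\[2pt]
\gamma(2t-1), & \tfrac12\leq t\leq 1.
\end{cases}
\]
This uses convexity of \(L\): the segment from \(\zeta\) to \(\zeta_0\) stays in \(L\), so \(c(t,\zeta)\in Y\) for all \((t,\zeta)\in[0,1]\times L\). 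The image \(c([0,1]\times L)\) is compact in \(Y\), so it has positive distance \(3\delta\) from the closed set \(E\).

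The next step replaces \(c\) by something holomorphic in \(\zeta\) on all of \(U\) and polynomial in \(t\), while keeping it within \(\delta\) of \(c\) on \([0,1]\times L\). For \(t\le 1/2\), \(c\) is already holomorphic in \(\zeta\). We therefore look for \(p\) of the form
\[
p(t,\zeta)=\sum_{k=0}^{N}a_k(\zeta)t^k,\qquad a_k\in\OO(U)^n .
\]
Approximate \(c\) uniformly on \([0,1]\times L\) as follows. Subdivide \([0,1]\) finely with nodes \(t_0,\dots,t_M\). At each node take \(g_l(\zeta)=f(\zeta_0+(1-2t_l)(\zeta-\zeta_0))\) for \(t_l\le 1/2\) and \(g_l\equiv\gamma(2t_l-1)\) otherwise; each \(g_l\) is holomorphic on \(U\), since \(U\) is convex and \(t_l\in[0,1]\). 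Interpolate piecewise linearly in \(t\). By uniform continuity of \(c\) on the compact set \([0,1]\times L\), this family is within \(\delta\) of \(c\) there, and it equals \(\sum_l \chi_l(t)g_l(\zeta)\) with continuous hat functions \(\chi_l\). Replace each \(\chi_l\) by a Weierstrass polynomial \(\pi_l\) with \(\sum_l\|\chi_l-\pi_l\|_{[0,1]}\sup_L|g_l|<\delta\).

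The result \(\tilde p(t,\zeta)=\sum_l\pi_l(t)g_l(\zeta)\) is holomorphic on \(\C\times U\), polynomial in \(t\), and satisfies \(|\tilde p-c|<2\delta\) on \([0,1]\times L\).

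Finally we enforce the endpoint conditions exactly with a linear correction:
\[
p(t,\zeta)=\tilde p(t,\zeta)+(1-t)\bigl(f(\zeta)-\tilde p(0,\zeta)\bigr)+t\bigl(p_*-\tilde p(1,\zeta)\bigr).
\]
This gives \(p(0,\zeta)=f(\zeta)\) and \(p(1,\zeta)=p_*\) for all \(\zeta\in U\). It remains polynomial in \(t\) and holomorphic on \(\C\times U\). On \(L\) the correction is small because \(c(0,\zeta)=f(\zeta)\) and \(c(1,\zeta)=p_*\), so both bracketed terms are bounded by \(2\delta\).

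This size estimate is the main point requiring care: the total error is up to \(4\delta\), which can exceed the distance \(3\delta\) to \(E\). To fix this, run the construction with \(\delta/2\) in place of \(\delta\) throughout. The total error is then at most \(2\delta<3\delta\), so \(p(t,\zeta)\notin E\) for \((t,\zeta)\in[0,1]\times L\).

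The only genuinely subtle point is that the approximating functions must be holomorphic on all of \(U\), not merely near \(L\). The construction handles this by using only the functions \(g_l\), which are affine reparametrizations of \(f\) (defined on the convex set \(U\)) or constants.
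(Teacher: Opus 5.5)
Your proof is correct and follows the paper's route: you use the same concatenated family (shrink the parameter to \(\zeta_0\), then follow \(\gamma\)) and approximate it in \(t\) by polynomial-weighted combinations of its time slices, which are holomorphic on all of \(U\) by convexity. The only difference is in the approximation step. The paper uses Bernstein polynomials \(\sum_{j}\binom Nj s^j(1-s)^{N-j}P(j/N,\zeta)\), which keep the endpoints exactly, so your linear endpoint correction and the \(\delta/2\) bookkeeping become unnecessary.
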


\begin{proof}
Choose \(\zeta_0\in L\). Since an open connected subset of Euclidean space is path connected, choose a continuous path \(\gamma:[0,1]\to Y\) with \(\gamma(0)=f(\zeta_0)\) and \(\gamma(1)=p_*\). Define
\[
P(t,\zeta)=
\begin{cases}
f((1-2t)\zeta+2t\zeta_0),&0\leq t\leq\tfrac12,\\
\gamma(2t-1),&\tfrac12\leq t\leq1.
\end{cases}
\]
Convexity of \(U\) makes the first expression well defined for every \(\zeta\in U\). The two expressions agree at \(t=1/2\), so \(P\) is jointly continuous and is holomorphic in \(\zeta\) at each time. Its values lie in \(Y\). In particular,
\[
d\coloneqq\dist\bigl(P([0,1]\times L),E\bigr)>0,
\]
since the first set is compact and \(E\) is closed.

The Bernstein polynomials
\begin{equation}\label{eq:bernstein-path}
p_N(s,\zeta)=\sum_{j=0}^N\binom Nj s^j(1-s)^{N-j}P(j/N,\zeta)
\end{equation}
are holomorphic on \(\C\times U\) and retain the endpoints exactly. Their convergence to \(P\) on \([0,1]\times L\) is uniform in the parameter. To verify the uniformity explicitly, let
\[
M=\sup_{[0,1]\times L}|P|,
\qquad
\omega(\delta)=\sup_{\substack{|t-t'|\leq\delta\\t,t'\in[0,1],\ \zeta\in L}}
|P(t,\zeta)-P(t',\zeta)|.
\]
The Bernstein weights are nonnegative and sum to one. Their second moment about \(t\) is \(t(1-t)/N\), so the total weight of indices with \(|j/N-t|>\delta\) is at most \(1/(4N\delta^2)\). Splitting the sum accordingly gives
\[
\sup_{[0,1]\times L}|p_N-P|
\leq\omega(\delta)+\frac{M}{2N\delta^2}.
\]
First choose \(\delta>0\) so that \(\omega(\delta)<d/4\), and then choose \(N\) so large that the second term is less than \(d/4\). It follows that \(p=p_N\) stays at distance at least \(d/2\) from \(E\) on \([0,1]\times L\), as required.
\end{proof}

We can now prove the general periodic-complement theorem.

\subsection{Periodic complements and Fatou--Bieberbach families}\label{subsec:periodic-families}

\begin{proof}[Proof of Theorem~\ref{thm:periodic-complement}]
It suffices to construct the stated normalized family, since its fiber derivative is surjective and Theorem~\ref{thm:kusakabe} then gives the Oka property. We may assume that the parameter set \(K\) is nonempty.

\smallskip
\noindent\textbf{Step 1: Choice of a fixed model outside a larger tube.}
Compactness of \(C\) gives numbers \(0<r_j\leq R_j<\infty\) such that \(r_j\leq|w_j|\leq R_j\) on \(C\). If \(z\in E\), then \(|e^{\ii z_j}|=e^{-\im z_j}\), hence
\[
-\log R_j\leq\im z_j\leq-\log r_j.
\]
Consequently \(E\subset T_{R_0}\) for some \(R_0>0\). Set \(S=R_0+1\) and choose \(h>4S\). Lemma~\ref{lem:tube} gives an automorphism \(\Psi\) with attracting fixed point \(p_*=(\ii h,\ldots,\ii h)\), its basin \(\Omega\), and a biholomorphism
\begin{equation}\label{eq:fixed-model}
\Phi:\C^n\overset{\sim}{\longrightarrow}\Omega\subset\C^n\setminus T_S,
\qquad \Phi(0)=p_*,\quad D\Phi(0)=I_n.
\end{equation}
This choice is independent of the section \(f\).

\smallskip
\noindent\textbf{Step 2: Transport to the fixed model and exact correction of the center.}
Choose a compact convex set \(L\) and a bounded convex open set \(U\) such that
\[
K\subset\Int L\subset L\Subset U\Subset U_0,
\]
and set \(V=\Int L\). Sufficiently small Euclidean thickenings of \(K\) give these sets. Lemma~\ref{lem:periodic-connectedness} shows that \(Y\) is path connected, so Lemma~\ref{lem:holomorphic-path} gives a holomorphic path \(p\) from \(f\) to \(p_*\) avoiding \(E\) over \([0,1]\times L\). Apply Lemma~\ref{lem:transport} with \(0<\epsilon<1/4\). It provides a holomorphic family of automorphisms \(A_\zeta\), \(\zeta\in U\), with holomorphic inverses, satisfying
\[
\sup_{\zeta\in L,\ z\in E}|A_\zeta(z)-z|<\epsilon,
\qquad
\sup_{\zeta\in L}|A_\zeta(f(\zeta))-p_*|<\epsilon.
\]
The correction
\begin{equation}\label{eq:general-correction}
\widetilde A_\zeta(z)=A_\zeta(z)+p_*-A_\zeta(f(\zeta))
\end{equation}
satisfies \(\widetilde A_\zeta(f(\zeta))=p_*\), and
\begin{equation}\label{eq:obstacle-in-model-tube}
\sup_{\zeta\in L,\ z\in E}|\widetilde A_\zeta(z)-z|<2\epsilon<\tfrac12,
\qquad \widetilde A_\zeta(E)\subset T_S.
\end{equation}
The inverse family is explicitly
\[
\widetilde A_\zeta^{-1}(y)=A_\zeta^{-1}\bigl(y-p_*+A_\zeta(f(\zeta))\bigr),
\]
and is therefore jointly holomorphic. Define the holomorphic invertible matrix
\begin{equation}\label{eq:normalizing-matrix}
M_\zeta=D\widetilde A_\zeta(f(\zeta)).
\end{equation}

\smallskip
\noindent\textbf{Step 3: Construction of the normalized family, its total inverse, and the basin dynamics.}
Set
\begin{equation}\label{eq:general-final-F}
F(\zeta,\xi)=\widetilde A_\zeta^{-1}\bigl(\Phi(M_\zeta\xi)\bigr),
\qquad (\zeta,\xi)\in V\times\C^n.
\end{equation}
This map is jointly holomorphic. Its values avoid \(E\), since otherwise \(\Phi(M_\zeta\xi)\) would belong to \(\widetilde A_\zeta(E)\subset T_S\), contrary to \eqref{eq:fixed-model}. Also,
\[
F(\zeta,0)=\widetilde A_\zeta^{-1}(p_*)=f(\zeta),
\qquad
D_\xi F(\zeta,0)=M_\zeta^{-1}D\Phi(0)M_\zeta=I_n.
\]
Each fiber is a biholomorphism onto \(\Omega_\zeta=\widetilde A_\zeta^{-1}(\Omega)\). The total image
\[
\mathcal U=\{(\zeta,z)\in V\times\C^n:\widetilde A_\zeta(z)\in\Omega\}
\]
is open. On \(\mathcal U\), the total inverse is
\begin{equation}\label{eq:general-total-inverse}
(\zeta,z)\longmapsto
\bigl(\zeta,M_\zeta^{-1}\Phi^{-1}(\widetilde A_\zeta(z))\bigr),
\end{equation}
which is jointly holomorphic. This proves the total biholomorphism assertion.

Finally, define
\begin{equation}\label{eq:conjugated-dynamics}
g_\zeta=\widetilde A_\zeta^{-1}\circ\Psi\circ \widetilde A_\zeta.
\end{equation}
This is a holomorphic family of automorphisms, it fixes \(f(\zeta)\), and its derivative there is conjugate to \(D\Psi(p_*)\). Hence the fixed point is attracting. For every integer \(k\geq0\),
\[
g_\zeta^k=\widetilde A_\zeta^{-1}\circ\Psi^k\circ \widetilde A_\zeta.
\]
It follows, using continuity of \(\widetilde A_\zeta\) and \(\widetilde A_\zeta^{-1}\), that the attracting basin of \(f(\zeta)\) is exactly \(\widetilde A_\zeta^{-1}(\Omega)=\Omega_\zeta\). Since \(E\neq\varnothing\), these domains are proper. All the assertions follow.
\end{proof}

The normalized families also give dominating sprays on the quotient complement.

\begin{corollary}\label{cor:quotient-complement}
For every \(n\geq2\) and every compact \(\OO(X)\)-convex set \(C\subset X=(\C^*)^n\), the complement \(X\setminus C\) is Oka.
\end{corollary}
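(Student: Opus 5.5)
We plan to verify Kusakabe's criterion (Theorem~\ref{thm:kusakabe}) for \(X\setminus C\) by lifting a section to the covering, applying Theorem~\ref{thm:periodic-complement} there, and pushing the resulting family down by \(q\).

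\textbf{Lifting.} Let \(K\subset\C^m\) be compact and convex, and let \(f:U_0\to X\setminus C\) be holomorphic on a neighborhood \(U_0\) of \(K\). After shrinking, we may take \(U_0\) to be a convex open neighborhood of \(K\), for instance a small Euclidean thickening. Since \(U_0\) is simply connected and \(q:\C^n\setminus E\to X\setminus C\) is a covering map, where \(E=q^{-1}(C)\), the map \(f\) lifts to a holomorphic map \(\tilde f:U_0\to Y=\C^n\setminus E\) with \(q\circ\tilde f=f\). Holomorphy of the lift is automatic, because \(q\) is a local biholomorphism.

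\textbf{Family on the cover.} Apply Theorem~\ref{thm:periodic-complement} to \(\tilde f\). This gives a neighborhood \(V\) of \(K\) and a holomorphic map \(F:V\times\C^n\to Y\) with \(F(\zeta,0)=\tilde f(\zeta)\) and \(D_\xi F(\zeta,0)=I_n\). If \(C=\varnothing\), the same construction works with \(F(\zeta,\xi)=\tilde f(\zeta)+\xi\), because then \(Y=\C^n\).

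\textbf{Descending.} Set \(S=q\circ F:V\times\C^n\to X\setminus C\). Then \(S(\zeta,0)=f(\zeta)\), and
\[
D_\xi S(\zeta,0)=Dq(\tilde f(\zeta))\cdot I_n=\operatorname{diag}\bigl(\ii e^{\ii\tilde f_j(\zeta)}\bigr),
\]
which is invertible and in particular surjective. Thus \(S\) is a dominating holomorphic spray over \(f\) with entire fiber \(\C^n\), and Theorem~\ref{thm:kusakabe} shows that \(X\setminus C\) is Oka.

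\textbf{Main point to check.} The only step needing care is the existence of the lift over a convex, hence simply connected, neighborhood, together with the observation that Kusakabe's criterion allows us to replace the given neighborhood by a smaller convex one. No injectivity of \(S\) is required, so the fact that \(q\) is not injective on the basins does no harm.
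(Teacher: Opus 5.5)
Your proposal is correct and follows the paper's proof essentially step for step. The paper also shrinks to a convex neighborhood and lifts the section; it does this by taking holomorphic logarithms of the coordinate functions, which is the same as your covering-space lift. It then applies Theorem~\ref{thm:periodic-complement} and pushes the normalized family down by \(q\) to obtain a dominating spray. Your separate treatment of \(C=\varnothing\) (by translation on the cover) descends to the spray \(\bigl(g_je^{\ii\xi_j}\bigr)_j\). The paper writes down essentially the same spray directly, as \(\bigl(g_je^{\xi_j}\bigr)_j\).
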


\begin{proof}
If \(C=\varnothing\), every holomorphic map \(g=(g_1,\ldots,g_n):U\to X\) admits the dominating holomorphic spray
\[
S(\zeta,\xi)=\bigl(g_1(\zeta)e^{\xi_1},\ldots,g_n(\zeta)e^{\xi_n}\bigr),
\qquad (\zeta,\xi)\in U\times\C^n.
\]
Theorem~\ref{thm:kusakabe} therefore shows that \(X\) is Oka.

Suppose \(C\neq\varnothing\). Let \(m\geq1\), \(K\subset\C^m\) be compact and convex, \(U_0\subset\C^m\) be an open neighborhood of \(K\), and \(g:U_0\to X\setminus C\) be holomorphic. We may assume \(K\neq\varnothing\). Choose a convex open neighborhood \(U\subset U_0\) of \(K\). The nonvanishing coordinate functions \(g_j\) have holomorphic logarithms \(\ell_j\) on \(U\), so
\[
f=\frac1{\ii}(\ell_1,\ldots,\ell_n):U\longrightarrow\C^n\setminus q^{-1}(C)
\qquad\text{satisfies}\qquad q\circ f=g|_U.
\]
Theorem~\ref{thm:periodic-complement} gives a neighborhood \(V\subset U\) of \(K\) and a holomorphic map \(F:V\times\C^n\to\C^n\setminus q^{-1}(C)\) with \(F(\zeta,0)=f(\zeta)\) and \(D_\xi F(\zeta,0)=I_n\). Hence
\[
S=q\circ F:V\times\C^n\longrightarrow X\setminus C,
\qquad S(\zeta,0)=g(\zeta),
\]
and
\[
D_\xi S(\zeta,0)=Dq(f(\zeta))D_\xi F(\zeta,0)=Dq(f(\zeta)).
\]
This derivative is invertible because \(q\) is locally biholomorphic. Theorem~\ref{thm:kusakabe} now gives the Oka property of \(X\setminus C\).
\end{proof}

\subsection{Invariant tube families}\label{subsec:invariant-tube-families}

The fixed-model construction preserves the set \(\widetilde A_\zeta^{-1}(T_S)\supset E\). For positive-width rectangular tubes, the following refinement makes the original obstacle strictly forward invariant.

\begin{proposition}\label{prop:family}
Fix \(n\geq2\) and \(\delta>0\). Let \(m\geq1\), \(K\subset\C^m\) be compact and convex, \(U_0\subset\C^m\) be an open neighborhood of \(K\), and \(f:U_0\to\C^n\setminus T_\delta\) be holomorphic. There exist an open neighborhood \(V\subset U_0\) of \(K\), a holomorphic map \(F:V\times\C^n\to\C^n\setminus T_\delta\), and a holomorphic family \(\Psi_\zeta\in\Aut(\C^n)\) with all the family and basin properties in Theorem~\ref{thm:periodic-complement} for \(E=T_\delta\), such that \(\Omega_\zeta=F_\zeta(\C^n)\) is the attracting basin of \(\Psi_\zeta\) at \(f(\zeta)\) and
\[
\Psi_\zeta(T_\delta)\subset\Int T_\delta\qquad(\zeta\in V).
\]
\end{proposition}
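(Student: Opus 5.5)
The plan is to rebuild the dynamics separately for each parameter, with the contraction centred at \(f(\zeta)\) itself, instead of conjugating the fixed model of Lemma~\ref{lem:tube}. A conjugated model only preserves \(\widetilde A_\zeta^{-1}(T_S)\), which is not \(T_\delta\). Here \(C=q(T_\delta)\) is a product of closed annuli, so it is \(\OO(X)\)-convex, exactly as in the proof of Lemma~\ref{lem:tube}. I choose \(L\), \(U\) and \(V=\Int L\) as in Step~2 of the proof of Theorem~\ref{thm:periodic-complement}.

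\textbf{Path and transport.} I use the path \(p(s,\zeta)=(1+s)f(\zeta)\). Since \(\norm{\im f(\zeta)}_\infty>\delta\), we get \(\norm{\im p(s,\zeta)}_\infty=(1+s)\norm{\im f(\zeta)}_\infty>\delta\) for \(s\in[0,1]\), so the path avoids \(T_\delta\). Lemma~\ref{lem:transport} then gives equivariant automorphisms \(A_\zeta\) with three properties: \(|A_\zeta-\id|<\epsilon\) on \(T_\delta\), \(|A_\zeta(f(\zeta))-2f(\zeta)|<\epsilon\), and \(\norm{D A_\zeta(f(\zeta))-I}<\epsilon\), all uniformly for \(\zeta\in L\).

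\textbf{The dynamics.} I set
\[
\Psi_\zeta(z)=f(\zeta)+\tfrac12\bigl(A_\zeta(z)-A_\zeta(f(\zeta))\bigr).
\]
This map fixes \(f(\zeta)\). Writing
\[
\Psi_\zeta(z)=\tfrac12z+\tfrac12(A_\zeta(z)-z)+\tfrac12\bigl(2f(\zeta)-A_\zeta(f(\zeta))\bigr)
\]
shows that \(\Psi_\zeta(T_\delta)\subset T_{\delta/2+\epsilon}\subset\Int T_\delta\) once \(\epsilon<\delta/2\). Moreover \(\Lambda_\zeta\coloneqq D\Psi_\zeta(f(\zeta))=\tfrac12DA_\zeta(f(\zeta))\) satisfies \(\norm{\Lambda_\zeta-\tfrac12I}<\epsilon/2\). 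Because \(T_\delta\) is forward invariant and \(f(\zeta)\notin T_\delta\), the basin \(\Omega_\zeta\) of \(f(\zeta)\) avoids \(T_\delta\).

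\textbf{Parameter-uniform basin coordinates (main obstacle).} The remaining task is to produce \(F\) holomorphic in \((\zeta,\xi)\) with the normalization \eqref{eq:general-normalization} and the total biholomorphism property. Rosay--Rudin works fibrewise, so I need a version that is uniform in \(\zeta\). For \(\epsilon\) small, every eigenvalue of \(\Lambda_\zeta\) lies near \(1/2\), with \(|\lambda|_{\max}^2<|\lambda|_{\min}\). Then no resonance \(\lambda_i=\lambda^\alpha\) with \(|\alpha|\geq2\) is possible, uniformly in \(\zeta\in L\).

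I would therefore define
\[
G_\zeta=\lim_{k\to\infty}\Lambda_\zeta^{-k}\bigl(\Psi_\zeta^k(z)-f(\zeta)\bigr),
\]
or, if needed, the Rosay--Rudin limit corrected by a polynomial normal form. Convergence should be shown locally uniformly on the total basin \(\{(\zeta,z):z\in\Omega_\zeta\}\). The argument first treats a fixed ball around \(f(\zeta)\) whose radius is uniform in \(\zeta\); this uses a Taylor estimate and \(\norm{\Lambda_\zeta}\norm{\Lambda_\zeta^{-1}}^{\,}\cdot\norm{\Lambda_\zeta}<1\) for the quadratic terms. It then extends by the functional equation \(G_\zeta\circ\Psi_\zeta=\Lambda_\zeta G_\zeta\).

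The limit \(G_\zeta\) is a biholomorphism \(\Omega_\zeta\to\C^n\) with \(G_\zeta(f(\zeta))=0\) and \(DG_\zeta(f(\zeta))=I\), and it depends holomorphically on \(\zeta\). Setting \(F(\zeta,\xi)=G_\zeta^{-1}(\xi)\) then gives the family. The total map \((\zeta,z)\mapsto(\zeta,G_\zeta(z))\) is holomorphic and fibrewise bijective, so its inverse is holomorphic. The total image is open because each \(\Omega_\zeta\) is open and the contraction ball has uniform radius. The properties already proved for Theorem~\ref{thm:periodic-complement}, namely the attracting-basin description and the proper images, follow from these constructions, since \(T_\delta\neq\varnothing\).
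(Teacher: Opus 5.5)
Your proposal is correct and follows the paper's proof essentially step for step. The common steps are the path \((1+s)f(\zeta)\), the same \(\Psi_\zeta\) (the paper writes it as \(\tfrac12A_\zeta+c(\zeta)\)), the same forward-invariance estimate, and the parameter-uniform limit \(\Lambda_\zeta^{-k}(\Psi_\zeta^k-f(\zeta))\) extended by the functional equation, which the paper proves separately as Proposition~\ref{prop:basins}. The only difference is minor: the paper gets its uniform contraction ball from the \(\epsilon\)-independent radius \(r\) in Lemma~\ref{lem:transport}, with explicit pinching \(a=2/5\), \(b=3/5\), \(b^2<a\), whereas you use a Taylor bound on the compact set \(L\) after fixing \(\epsilon\). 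That also works, and the resonance discussion and normal-form fallback are unnecessary.
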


The strict inclusion requires \(\delta>0\), since \(\Int T_0=\varnothing\).

For the proof of Proposition~\ref{prop:family}, we first establish parameter-uniform basin coordinates. The following proposition gives a uniform parameter version of the classical linearization argument of Rosay and Rudin \cite[Theorem~9.1]{RR88}. Related constructions for nonautonomous basins appear in \cite[Theorem~4]{Wol05}, and their use in holomorphic families is described in \cite[proof of Theorem~2.3]{FW24}. We include the proof to establish joint holomorphic dependence under a uniform pinching condition.

\begin{proposition}\label{prop:basins}
Let \(V\) be a complex manifold, and \(g:V\times\C^d\to\C^d\) be holomorphic, with \(g_\zeta\coloneqq g(\zeta,\cdot)\in\Aut(\C^d)\) and \(g_\zeta(0)=0\). Set \(B_\zeta\coloneqq Dg_\zeta(0)\). Suppose there exist constants
\begin{equation}\label{eq:pinching}
R>0,\quad C\geq0,\quad 0<a\leq b<1,\quad b^2<a,
\end{equation}
such that for every \(\zeta\in V\) and \(|u|<R\),
\begin{align}
|g_\zeta(u)|&\leq b|u|,\label{eq:basin-contract}\\
\norm{B_\zeta^{-1}}&\leq a^{-1},\qquad \norm{B_\zeta}\leq b,\label{eq:basin-linear-bounds}\\
|g_\zeta(u)-B_\zeta u|&\leq C|u|^2. \label{eq:basin-quadratic}
\end{align}
Let
\[
\mathcal B_\zeta\coloneqq \{u\in\C^d:g_\zeta^n(u)\longrightarrow0\}.
\]
Then the total basin
\[
\mathcal B\coloneqq \{(\zeta,u):\zeta\in V,\ u\in\mathcal B_\zeta\}
\]
is open, and there is a biholomorphism
\begin{equation}\label{eq:total-basin-biholo}
\mathcal H:\mathcal B\longrightarrow V\times\C^d, \qquad \mathcal H(\zeta,u)=(\zeta,H_\zeta(u)),
\end{equation}
satisfying
\begin{equation}\label{eq:basin-normalization}
H_\zeta(0)=0,\qquad D H_\zeta(0)=I_d,\qquad H_\zeta\circ g_\zeta=B_\zeta H_\zeta.
\end{equation}
In particular, its fiberwise inverse depends holomorphically on both variables and is defined for every point of \(V\times\C^d\).
\end{proposition}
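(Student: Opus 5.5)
We will follow the classical Rosay--Rudin construction, defining
\[
H_\zeta\coloneqq\lim_{k\to\infty}B_\zeta^{-k}\circ g_\zeta^{k}
\]
and tracking all constants uniformly in \(\zeta\) through the pinching hypotheses \eqref{eq:pinching}--\eqref{eq:basin-quadratic}.

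\textbf{Step 1: convergence on a small ball.} On the ball \(|u|<R\), iteration of \eqref{eq:basin-contract} gives \(|g_\zeta^k(u)|\leq b^k|u|\). For \(h_k\coloneqq B_\zeta^{-k}g_\zeta^k\) we then estimate
\[
|h_{k+1}(u)-h_k(u)|=\bigl|B_\zeta^{-k-1}\bigl(g_\zeta(g_\zeta^k u)-B_\zeta g_\zeta^k u\bigr)\bigr|\leq a^{-k-1}C\,b^{2k}|u|^2,
\]
using \eqref{eq:basin-linear-bounds} and \eqref{eq:basin-quadratic}. Since \(b^2/a<1\), the series converges geometrically and uniformly on \(V\times\Ball_R\). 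Hence \(H_\zeta\) is jointly holomorphic there, by Weierstrass.

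Each \(h_k\) satisfies \(h_k(0)=0\) and \(Dh_k(0)=I\), so the limit has the same normalization \(H_\zeta(0)=0\), \(DH_\zeta(0)=I_d\). The relation \(h_{k}\circ g_\zeta=B_\zeta h_{k+1}\) passes to the limit and gives \(H_\zeta\circ g_\zeta=B_\zeta H_\zeta\) near \(0\). Note also that \(|H_\zeta(u)-u|\leq C'|u|^2\) uniformly in \(\zeta\).

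\textbf{Step 2: extension to the basin and openness.} The total basin \(\mathcal B\) equals \(\bigcup_k\{(\zeta,u):|g_\zeta^k(u)|<R\}\). One inclusion holds because an orbit converging to \(0\) eventually enters \(\Ball_R\); the other because once an orbit enters \(\Ball_R\), \eqref{eq:basin-contract} forces convergence to \(0\). Each set in this union is open, so \(\mathcal B\) is open.

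On the \(k\)-th set, define \(H_\zeta\coloneqq B_\zeta^{-k}H_\zeta(g_\zeta^k(u))\). The functional equation shows these definitions agree where they overlap, so \(\mathcal H\) is a well-defined holomorphic map on \(\mathcal B\). It is injective on each fiber: \(H_\zeta\) is injective on a uniform ball \(\Ball_{R'}\) by the quadratic bound, and \(g_\zeta^k\) is injective.

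\textbf{Step 3: surjectivity and the inverse (main obstacle).} The essential step is to show that each fiber \(H_\zeta\) maps onto all of \(\C^d\), with a jointly holomorphic inverse. First, the quadratic bound and a uniform Rouché (or inverse function) argument give \(r>0\), independent of \(\zeta\), such that \(\Ball_r\subset H_\zeta(\Ball_{R'})\). The local inverse \(H_\zeta^{-1}\) on \(\Ball_r\) is then jointly holomorphic by the holomorphic implicit function theorem.

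Next take \(v\in\C^d\). Since \(\norm{B_\zeta}\leq b\), we have \(|B_\zeta^k v|\leq b^k|v|<r\) for \(k\) large, uniformly for \(v\) in compact sets. Set
\[
u\coloneqq g_\zeta^{-k}\bigl(H_\zeta^{-1}(B_\zeta^kv)\bigr).
\]
The point \(H_\zeta^{-1}(B_\zeta^kv)\) lies in \(\Ball_{R'}\subset\Ball_R\), so \(u\in\mathcal B_\zeta\), and the functional equation gives \(H_\zeta(u)=v\). Here \(g_\zeta^{-1}\) is jointly holomorphic because \(g\) is a holomorphic family of automorphisms: the total map \((\zeta,u)\mapsto(\zeta,g_\zeta(u))\) is a bijective holomorphic self-map of \(V\times\C^d\), hence biholomorphic.

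These local formulas, one for each \(k\), agree on overlaps and give a global jointly holomorphic inverse \(V\times\C^d\to\mathcal B\). Therefore \(\mathcal H\) is a biholomorphism satisfying \eqref{eq:basin-normalization}.
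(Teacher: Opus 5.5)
Your proposal is correct and follows essentially the same route as the paper: the uniform Rosay--Rudin limit of $B_\zeta^{-k}g_\zeta^k$ on a common ball with the $(b^2/a)^k$ telescoping bound, extension to the basin via $B_\zeta^{-k}H_\zeta\circ g_\zeta^k$, injectivity on a uniform smaller ball, and surjectivity via $g_\zeta^{-k}\circ H_\zeta^{-1}\circ B_\zeta^k$; the injectivity step needs a Cauchy estimate on $DH_\zeta-I_d$ derived from the quadratic bound, which the paper carries out explicitly and you should state. The remaining differences are cosmetic: the paper obtains the uniform image ball by a contraction mapping rather than Rouch\'e, and deduces holomorphy of the total inverse from the block-triangular inverse function theorem rather than by gluing your explicit local inverse formulas.
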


\begin{proof}
We begin with a local linearization on a common ball. By \eqref{eq:basin-contract}, every iterate of a point of \(\Ball_R\) stays in that ball. Set
\begin{equation}\label{eq:h-n}
h_{\zeta,n}(u)\coloneqq B_\zeta^{-n}g_\zeta^n(u), \qquad |u|<R,\quad n\geq0.
\end{equation}
Each function is jointly holomorphic in \((\zeta,u)\); matrix inversion is holomorphic because \(B_\zeta\) is invertible. With \(R_\zeta(u)\coloneqq g_\zeta(u)-B_\zeta u\), we obtain
\begin{align}
h_{\zeta,n+1}(u)-h_{\zeta,n}(u) &=B_\zeta^{-(n+1)}R_\zeta(g_\zeta^n(u)),\label{eq:h-telescope}\\
|h_{\zeta,n+1}(u)-h_{\zeta,n}(u)| &\leq\frac Ca\left(\frac{b^2}{a}\right)^n|u|^2. \label{eq:h-geometric}
\end{align}
The ratio \(b^2/a\) is strictly less than one. The sequence therefore converges uniformly on compact subsets of \(V\times\Ball_R\) to a jointly holomorphic map \(h_\zeta(u)\), and
\begin{equation}\label{eq:h-quadratic}
|h_\zeta(u)-u|\leq D|u|^2, \qquad D\coloneqq \frac{C}{a(1-b^2/a)}.
\end{equation}
Moreover,
\begin{equation}\label{eq:h-normalization}
h_\zeta(0)=0,\qquad Dh_\zeta(0)=I_d.
\end{equation}
The second equality follows either from \eqref{eq:h-quadratic} or from differentiating the locally uniform limit. The identity
\[
h_{\zeta,n}(g_\zeta(u))=B_\zeta h_{\zeta,n+1}(u)
\]
passes to the limit and gives
\begin{equation}\label{eq:h-conjugacy}
h_\zeta(g_\zeta(u))=B_\zeta h_\zeta(u),\qquad |u|<R.
\end{equation}
Choose \(r_0>0\) with \(2r_0<R\), small enough that \(4Dr_0<1/2\) when \(D>0\). For \(|u|\leq r_0\) and a unit vector \(v\), applying Cauchy's formula on the complex line \(u+\lambda v\), \(|\lambda|=r_0\), to \eqref{eq:h-quadratic} gives
\[
|D(h_\zeta-\id)(u)v|\leq4Dr_0<1/2.
\]
If \(D=0\), the same conclusion is immediate. Hence, after decreasing \(r_0\) if necessary,
\begin{equation}\label{eq:h-derivative-small}
\norm{Dh_\zeta(u)-I_d}\leq1/2 \quad(\zeta\in V,\ |u|\leq r_0).
\end{equation}
Integrating along line segments in the convex ball yields
\begin{equation}\label{eq:h-bilipschitz}
|h_\zeta(u)-h_\zeta(v)|\geq\tfrac12|u-v|, \qquad u,v\in\Ball_{r_0}.
\end{equation}
Thus \(h_\zeta\) is injective there and has everywhere invertible derivative.

It also satisfies
\begin{equation}\label{eq:h-image-ball}
\Ball_{r_0/2}\subset h_\zeta(\Ball_{r_0}) \qquad(\zeta\in V).
\end{equation}
Indeed, for \(|w|<r_0/2\), the map
\[
u\longmapsto w-(h_\zeta(u)-u)
\]
is a contraction of the closed ball \(\overline\Ball_{r_0}\) into itself, by \eqref{eq:h-derivative-small}, and its fixed point satisfies \(|u|\leq2|w|<r_0\). The inverse
\begin{equation}\label{eq:local-inverse}
k(\zeta,w)\coloneqq h_\zeta^{-1}(w),\qquad |w|<r_0/2,
\end{equation}
is jointly holomorphic: the holomorphic inverse function theorem applies locally to \((\zeta,u)\mapsto(\zeta,h_\zeta(u))\), and uniqueness makes the local inverses agree.

The local coordinate extends to the whole basin as follows. Since \(g_\zeta(\Ball_{r_0})\subset\Ball_{r_0}\), one has
\begin{equation}\label{eq:basin-union}
\mathcal B=\bigcup_{n\geq0} \{(\zeta,u):g_\zeta^n(u)\in\Ball_{r_0}\}.
\end{equation}
The sets on the right are open and increasing, so \(\mathcal B\) is open. Equality holds because entering the ball forces geometric convergence to zero, whereas every orbit converging to zero eventually enters it.

For \(u\in\mathcal B_\zeta\), choose \(n\) so large that \(g_\zeta^n(u)\in\Ball_{r_0}\), and set
\begin{equation}\label{eq:H-global}
H_\zeta(u)\coloneqq B_\zeta^{-n}h_\zeta(g_\zeta^n(u)).
\end{equation}
Equation~\eqref{eq:h-conjugacy} shows that \eqref{eq:H-global} is independent of \(n\) once the orbit has entered \(\Ball_{r_0}\). These formulas agree on the overlaps of the open sets in \eqref{eq:basin-union}, so \(H(\zeta,u)\coloneqq H_\zeta(u)\) is jointly holomorphic on the total basin. Formula \eqref{eq:H-global} also shows that the fiber derivative of \(H_\zeta\) is invertible everywhere and gives the identities in \eqref{eq:basin-normalization}.

It remains to prove global bijectivity and holomorphic dependence of the inverse. Suppose \(H_\zeta(u)=H_\zeta(v)\). Choose one integer \(n\) taking both points into \(\Ball_{r_0}\). Equation~\eqref{eq:H-global}, invertibility of \(B_\zeta\), and injectivity of \(h_\zeta\) on the small ball imply \(g_\zeta^n(u)=g_\zeta^n(v)\). Since \(g_\zeta^n\) is an automorphism, \(u=v\).

Given \(w\in\C^d\), choose \(n\) so large that \(b^n|w|<r_0/2\). By \eqref{eq:basin-linear-bounds} and \eqref{eq:h-image-ball}, \(B_\zeta^n w\in h_\zeta(\Ball_{r_0})\). Thus
\begin{equation}\label{eq:global-inverse-formula}
u\coloneqq g_\zeta^{-n}\bigl(k(\zeta,B_\zeta^n w)\bigr)
\end{equation}
belongs to \(\mathcal B_\zeta\) and satisfies \(H_\zeta(u)=w\). This proves surjectivity.

Thus the total map \eqref{eq:total-basin-biholo} is bijective. Its derivative is block triangular with identity in the parameter block and invertible derivative \(D_uH_\zeta\) in the fiber block. The holomorphic inverse function theorem therefore gives a globally holomorphic inverse.

\end{proof}

\begin{proof}[Proof of Proposition~\ref{prop:family}]
First choose a compact convex set \(L\) and a bounded convex open set \(U\) so that
\begin{equation}\label{eq:parameter-nesting}
K\subset\Int L\subset L\Subset U\Subset U_0.
\end{equation}
For example, sufficiently small Euclidean thickenings of \(K\) give these sets. Since \(f(L)\) is compact in \(\C^n\setminus T_\delta\), the prescribed width satisfies
\[
\min_{\zeta\in L}\norm{\im f(\zeta)}_\infty>\delta.
\]
Set \(V\coloneqq \Int L\), an open neighborhood of \(K\).

Set
\[
C_\delta=\{w\in X:e^{-\delta}\leq|w_j|\leq e^\delta,\ j=1,\ldots,n\}.
\]
This compact set is \(\OO(X)\)-convex: the functions \(w_j\) and \(w_j^{-1}\) separate points violating the upper and lower bounds, respectively. The holomorphic path \(p(s,\zeta)=(1+s)f(\zeta)\) satisfies
\[
\norm{\im p(t,\zeta)}_\infty
=(1+t)\norm{\im f(\zeta)}_\infty>\delta
\qquad(t\in[0,1],\ \zeta\in L),
\]
so it avoids \(q^{-1}(C_\delta)=T_\delta\) and Lemma~\ref{lem:transport} applies. Fix the radius \(r>0\) supplied by that lemma, independently of the approximation tolerance, and choose
\begin{equation}\label{eq:final-epsilon}
0<\epsilon<\min\{\delta/4,1/10\}.
\end{equation}
For this tolerance, the lemma gives a holomorphic family of \(\Gamma\)-equivariant automorphisms \(A_\zeta\in\Aut(\C^n)\), \(\zeta\in U\), satisfying
\begin{align}
\sup_{\zeta\in L,\ z\in T_\delta}|A_\zeta(z)-z|&<\epsilon,\label{eq:A-tube}\\
\sup_{\zeta\in L,\ |u|\leq r}|A_\zeta(f(\zeta)+u)-(2f(\zeta)+u)|&<\epsilon,\label{eq:A-translation}\\
\sup_{\zeta\in L,\ |u|\leq r}\norm{D_zA_\zeta(f(\zeta)+u)-I_n}&<\epsilon.\label{eq:A-derivative}
\end{align}
The inverse family is holomorphic and \(\det D_zA_\zeta\equiv1\). Set
\begin{align}
c(\zeta)&\coloneqq f(\zeta)-\tfrac12A_\zeta(f(\zeta)), \label{eq:correction}\\
\Psi_\zeta(z)&\coloneqq \tfrac12 A_\zeta(z)+c(\zeta). \label{eq:Psi}
\end{align}
The maps \(c\) and \(\Psi\) are holomorphic, each \(\Psi_\zeta\) is an automorphism, and the correction fixes the prescribed section:
\begin{equation}\label{eq:Psi-fixed}
\Psi_\zeta(f(\zeta))=f(\zeta).
\end{equation}
At \(u=0\), \eqref{eq:A-translation} gives
\begin{equation}\label{eq:c-small}
|c(\zeta)|<\epsilon/2\qquad(\zeta\in L).
\end{equation}

The real tube is forward invariant. For \(\zeta\in L\) and \(z\in T_\delta\), equations \eqref{eq:A-tube}, \eqref{eq:Psi}, and \eqref{eq:c-small} give
\begin{equation}\label{eq:invariant-tube}
\norm{\im\Psi_\zeta(z)}_\infty\leq\tfrac12\norm{\im z}_\infty+\tfrac12|A_\zeta(z)-z|+|c(\zeta)|<\tfrac12\delta+\epsilon<\delta.
\end{equation}
Hence
\begin{equation}\label{eq:tube-forward-invariant}
\Psi_\zeta(T_\delta)\subset\Int T_\delta.
\end{equation}

The prescribed section is uniformly attracting. In coordinates centered at the fixed section, we write
\begin{equation}\label{eq:recenter-g}
g_\zeta(u)\coloneqq \Psi_\zeta(f(\zeta)+u)-f(\zeta), \qquad B_\zeta\coloneqq Dg_\zeta(0).
\end{equation}
Then \(g_\zeta(0)=0\), and, by \eqref{eq:A-derivative} and \eqref{eq:final-epsilon},
\begin{equation}\label{eq:derivative-pinched}
\norm{Dg_\zeta(u)-\tfrac12 I_n} <\epsilon/2<1/20, \qquad\zeta\in L,\quad |u|\leq r.
\end{equation}
Integration along the segment from \(0\) to \(u\) implies
\begin{equation}\label{eq:g-near-half}
|g_\zeta(u)-\tfrac12u|\leq\tfrac1{20}|u|.
\end{equation}
We may therefore take
\begin{equation}\label{eq:ab-explicit}
a=\frac25,\qquad b=\frac35,\qquad \frac{b^2}{a}=\frac9{10}<1.
\end{equation}
Indeed, \(|g_\zeta(u)|\leq(11/20)|u|\leq b|u|\), while the smallest singular value of \(B_\zeta\) is at least \(9/20>a\). Thus
\[
\norm{B_\zeta^{-1}}\leq20/9<a^{-1}, \qquad\norm{B_\zeta}\leq11/20<b.
\]

Take \(R=r/2\). The second fiber derivatives of \(g\) are bounded on the compact set \(L\times\overline\Ball_R\), since \(g\) is holomorphic on \(U\times\C^n\). Taylor's formula with integral remainder therefore gives a constant \(C\), independent of \(\zeta\), such that
\[
|g_\zeta(u)-B_\zeta u|\leq C|u|^2 \qquad(\zeta\in V,\ |u|<R).
\]
All hypotheses of Proposition~\ref{prop:basins} are now satisfied with constants independent of \(\zeta\).

To pass to the attracting basins, let
\begin{equation}\label{eq:Omega-Psi}
\Omega_\zeta\coloneqq \{z\in\C^n:\Psi_\zeta^\nu(z)\longrightarrow f(\zeta)\text{ as }\nu\to\infty\}.
\end{equation}
By construction \(f(\zeta)\notin T_\delta\). If \(z\in T_\delta\), all iterates \(\Psi_\zeta^\nu(z)\) remain in the closed set \(T_\delta\), by \eqref{eq:tube-forward-invariant}. They cannot converge to \(f(\zeta)\). Therefore
\begin{equation}\label{eq:Omega-avoid}
\Omega_\zeta\cap T_\delta=\varnothing.
\end{equation}

Let \(H_\zeta:\mathcal B_\zeta\to\C^n\) be the basin coordinate from Proposition~\ref{prop:basins}, where \(\mathcal B_\zeta=\Omega_\zeta-f(\zeta)\), and write \(G_\zeta:\C^n\to\mathcal B_\zeta\) for its inverse. Set
\begin{equation}\label{eq:final-F}
F(\zeta,\xi)\coloneqq f(\zeta)+G_\zeta(\xi).
\end{equation}
The map is jointly holomorphic on \(V\times\C^n\); its image in each fiber is \(\Omega_\zeta\subset\C^n\setminus T_\delta\). The normalization in Proposition~\ref{prop:basins} gives
\[
F(\zeta,0)=f(\zeta),\qquad D_\xi F(\zeta,0)=I_n.
\]
The total biholomorphism follows from Proposition~\ref{prop:basins} after the holomorphic change of fiber coordinate \(z=f(\zeta)+u\). This proves Proposition~\ref{prop:family}.
\end{proof}

\section{Convex tubes and Cartesian-product complements}\label{sec:applications}

\subsection{Tube complements and totally real planes}\label{subsec:real-plane}

For a nonempty compact convex set \(B\subset\R^n\), let
\begin{equation}\label{eq:logarithmic-tube-quotient}
C_B=\bigl\{w\in X:(\log|w_1|,\ldots,\log|w_n|)\in-B\bigr\}.
\end{equation}
Then \(q^{-1}(C_B)=T_B\). The following elementary separation argument identifies the convexity needed in Theorem~\ref{thm:periodic-complement}.

\begin{lemma}\label{lem:convex-tube-quotient}
The set \(C_B\) is compact and \(\OO(X)\)-convex.
\end{lemma}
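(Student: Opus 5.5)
Compactness and \(\OO(X)\)-convexity of \(C_B\) will both be read off in logarithmic coordinates, with points outside \(C_B\) separated by Laurent monomials.

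\textbf{Compactness.} The map \(L(w)=(\log|w_1|,\ldots,\log|w_n|)\) from \(X\) to \(\R^n\) is continuous and proper. Indeed, the preimage of a compact set is a closed subset of a compact product of annuli. Since \(-B\) is compact, \(C_B=L^{-1}(-B)\) is compact.

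\textbf{Convexity.} Fix \(w_0\in X\setminus C_B\) and set \(y_0=L(w_0)\notin -B\). Because \(-B\) is compact and convex, the Hahn--Banach separation theorem gives \(\lambda\in\R^n\) and \(c\in\R\) with
\[
\lambda\cdot y_0>c>\max_{y\in -B}\lambda\cdot y .
\]
The plan is to reduce to an integral direction and then exponentiate. The strict inequality is an open condition in \(\lambda\). So after perturbing \(\lambda\) to a rational vector and scaling by a positive integer, we may assume \(\lambda=k\in\Z^n\), with the strict separation kept (\(c\) is rescaled accordingly). For the monomial \(h(w)=w^k\in\OO(X)\) we have \(\log|h(w)|=k\cdot L(w)\). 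Hence
\[
|h(w_0)|=e^{k\cdot y_0}>e^{c}\geq\sup_{C_B}|h| .
\]
Therefore \(w_0\notin\widehat{(C_B)}_{\OO(X)}\), and so \(C_B\) is \(\OO(X)\)-convex.

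\textbf{Main step and degenerate case.} The only real point is passing from real separating functionals to integer exponents, which the perturbation argument above handles. The proof uses no regularity of \(B\): the argument works equally well when \(B\) has empty interior, for example \(B=\{0\}\), which gives \(T_B=\R^n\) and \(C_B=(S^1)^n\).
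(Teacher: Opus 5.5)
Your proof is correct, and it differs from the paper's at the one substantive step. The paper also separates \(L(w_0)\) from \(-B\) by a real functional \(a\), but then uses the pluriharmonic function \(u_a(v)=\sum_j a_j\log|v_j|\) as it stands. Since \(a\) is only real, \(u_a\) is in general not \(\log|h|\) for any \(h\in\OO(X)\). The paper therefore concludes only that the plurisubharmonic hull of \(C_B\) is \(C_B\), and then cites the theorem that on a Stein manifold this hull coincides with the \(\OO(X)\)-hull.

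You instead use that, for fixed \(c\), both strict inequalities are open conditions in the direction, because the support function of \(-B\) is continuous. You move to a rational and then an integral direction \(k\), and separate with the monomial \(w^k\) itself. This makes the argument self-contained, with explicit separating functions in the same spirit as the paper's use of \(w_j^{\pm1}\) for rectangular polyannuli. It also shows slightly more: \(C_B\) is convex with respect to Laurent monomials alone.

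The paper's route skips your density step, at the price of citing a deeper hull theorem. That theorem would also handle compact sets cut out by plurisubharmonic conditions that have no integral counterpart.
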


\begin{proof}
The set \(C_B\) is closed in \(X\), and compactness of \(-B\) bounds each coordinate modulus above and away from zero, so \(C_B\) is compact.

For \(w\notin C_B\), strict real linear separation of \((\log|w_1|,\ldots,\log|w_n|)\) from \(-B\) gives \(a\in\R^n\) such that the globally pluriharmonic function
\[
u_a(v)=\sum_{j=1}^n a_j\log|v_j|,\qquad v\in X,
\]
satisfies \(u_a(w)>\sup_{C_B}u_a\). Hence the hull of \(C_B\) with respect to global smooth plurisubharmonic functions is \(C_B\). On the Stein manifold \(X\), this hull equals the holomorphic hull; see \cite[\S2, p.~338]{For22SN}. Thus \(C_B\) is \(\OO(X)\)-convex.
\end{proof}

\begin{proof}[Proof of Theorem~\ref{thm:main}\,(i)]
The case \(B=\varnothing\) gives \(\C^n\), so assume \(B\neq\varnothing\). By Lemma~\ref{lem:convex-tube-quotient}, Theorem~\ref{thm:periodic-complement} applies with \(C=C_B\). It proves the Oka property and, in addition, provides normalized holomorphic families of attracting basins over every prescribed holomorphic section. This includes lower-dimensional cross-sections and \(B=\{0\}\); no interior or boundary regularity assumption has been used.
\end{proof}

\begin{corollary}\label{cor:affine}
For \(n\geq2\), let \(P=p+L\subset\C^n\) be a maximally totally real affine plane and \(D\subset L\) be compact and convex, where \(L\) is a real vector space of dimension \(n\). Then \(\C^n\setminus(P+\ii D)\) is Oka. In particular, \(\C^n\setminus P\) is Oka.
\end{corollary}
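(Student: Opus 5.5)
The plan is to reduce Corollary~\ref{cor:affine} to Theorem~\ref{thm:main}\,(i) by an affine complex change of coordinates. The Oka property is invariant under biholomorphisms, so it suffices to find a complex affine automorphism \(\Lambda\) of \(\C^n\) with \(\Lambda(P+\ii D)=T_B\) for some compact convex \(B\subset\R^n\).

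\textbf{Step 1: the linear part.} Since \(L\) is maximally totally real of real dimension \(n\), we have \(L\cap\ii L=\{0\}\), so \(L\oplus\ii L=\C^n\) as real vector spaces. Choose a real basis \(v_1,\ldots,v_n\) of \(L\). It is then a complex basis of \(\C^n\): a relation \(\sum(a_j+\ii b_j)v_j=0\) with \(a_j,b_j\) real forces \(\sum a_jv_j=-\ii\sum b_jv_j\in L\cap\ii L=\{0\}\). Let \(M\in GL_n(\C)\) send \(v_j\) to the standard basis vector \(e_j\). Then \(M\) is complex linear, \(M(L)=\R^n\), and \(M(\ii D)=\ii M(D)\). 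Here \(B\coloneqq M(D)\subset\R^n\) is compact and convex, because \(M|_L\) is a real linear isomorphism onto \(\R^n\).

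\textbf{Step 2: the translation.} Set \(\Lambda(z)=M(z-p)\). Then
\[
\Lambda(P+\ii D)=M(L)+\ii M(D)=\R^n+\ii B=T_B .
\]
Hence \(\Lambda\) maps \(\C^n\setminus(P+\ii D)\) biholomorphically onto \(\C^n\setminus T_B\), which is Oka by Theorem~\ref{thm:main}\,(i). The case \(D=\{0\}\) gives \(B=\{0\}\) and \(T_B=\R^n\), which yields the statement for \(\C^n\setminus P\).

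\textbf{Possible obstacle.} The only point needing care is the interpretation of the set \(P+\ii D\) and the complex linearity of \(M\). We use \(M(\ii x)=\ii M(x)\) for \(x\in D\), which holds because \(M\) is \(\C\)-linear. The invariance of the Oka property under biholomorphisms is immediate from the convex approximation definition: compose the maps with \(\Lambda^{\pm1}\), and use the fact that the distance may be transported by \(\Lambda\). There is no substantive difficulty in this argument.
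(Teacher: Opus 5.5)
Your proposal is correct and follows essentially the same route as the paper. The paper also shows that a real basis of \(L\) is a complex basis, takes \(A(z)=\sum_j z_jv_j\) (your \(M\) is \(A^{-1}\)) with \(B=A^{-1}(D)\), maps \(T_B\) onto \(P+\ii D\) by \(z\mapsto p+A(z)\), and concludes from Theorem~\ref{thm:main}\,(i) and biholomorphic invariance of the Oka property.
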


\begin{proof}
Choose a real basis \(v_1,\ldots,v_n\) of \(L\). These vectors are complex linearly independent: if \(\sum_j(a_j+\ii b_j)v_j=0\) with real \(a_j,b_j\), then \(\sum_j a_jv_j=-\ii\sum_j b_jv_j\) belongs to \(L\cap\ii L=\{0\}\). Real linear independence gives \(a_j=b_j=0\) for every \(j\). Thus \(A(z)=\sum_j z_jv_j\) is a complex linear automorphism. The set \(B=A^{-1}(D)\subset\R^n\) is compact and convex, and the affine complex automorphism \(z\mapsto p+A(z)\) carries \(T_B\) onto \(P+\ii D\). The conclusion follows from Theorem~\ref{thm:main}\,(i) and biholomorphic invariance of the Oka property. Take \(D=\{0\}\) for the last assertion.
\end{proof}

\subsection{Products of planar compact sets}

We first characterize the punctures for which a planar compact set becomes holomorphically convex, and then use logarithmic covers and localization to prove the general product result.

\begin{lemma}\label{lem:punctured-convexity}
Let \(K\subset\C\) be a nonempty compact set and \(a\in\C\setminus K\). The set \(K-a\subset\C^*\) is \(\OO(\C^*)\)-convex if and only if every bounded connected component of \(\C\setminus K\) contains \(a\).
\end{lemma}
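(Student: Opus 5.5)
After translating by \(a\), we may assume \(a=0\), so \(0\notin K\) and \(K\subset\C^*\). The plan is to describe the \(\OO(\C^*)\)-hull of \(K\) topologically and then compare it with \(K\).

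\textbf{Step 1: identify the hull.} I claim that \(\widehat K_{\OO(\C^*)}\) is the union of \(K\) with every bounded component of \(\C\setminus K\) that does not contain \(0\). The argument proceeds through the two inclusions below.

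First, let \(D\) be a bounded component of \(\C\setminus K\) with \(0\notin D\). Then \(\overline D\subset\C^*\) is compact and \(\partial D\subset K\). For any \(h\in\OO(\C^*)\), the maximum modulus principle on \(D\) gives \(|h|\leq\sup_K|h|\) on \(D\). Hence \(D\) lies in the hull. This is the same argument as in Lemma~\ref{lem:periodic-connectedness}.

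Second, let \(w\notin K\) lie in either the unbounded component of \(\C\setminus K\) or the bounded component containing \(0\). I will produce a Laurent polynomial \(h\) with \(|h(w)|>\sup_K|h|\). To do this, set \(K'=K\cup\{w\}\). Then \(\C^*\setminus K'\), viewed on the Riemann sphere, has every component of \(\widehat\C\setminus K'\) containing \(0\) or \(\infty\). Runge's theorem with poles only at \(0\) and \(\infty\) then shows that functions holomorphic near \(K'\) are uniform limits on \(K'\) of Laurent polynomials.

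Apply this to the function that equals \(0\) near \(K\) and \(1\) near \(w\). This function is holomorphic near \(K'\) because \(w\notin K\) and we may use disjoint neighborhoods. Approximating it within \(1/3\) separates \(w\) from \(K\).

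To make the Runge step legitimate, I must check that \(w\) does not disconnect anything new. Removing a single point \(w\) from an open component of \(\widehat\C\setminus K\) leaves it connected. So the components of \(\widehat\C\setminus K'\) are exactly those of \(\widehat\C\setminus K\), with \(w\) deleted from one of them. The components containing \(\infty\) or \(0\) are precisely the ones we allowed, so the hypothesis of Runge's theorem holds.

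\textbf{Step 2: conclude.} By Step 1, \(K\) is \(\OO(\C^*)\)-convex exactly when no bounded component of \(\C\setminus K\) omits \(0\). That is, every bounded component contains \(0\). For the ``only if'' direction we use that bounded components are open and nonempty, so any component omitting \(0\) adds points to the hull.

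\textbf{Main obstacle.} The only delicate point is the Runge approximation with poles restricted to \(\{0,\infty\}\). That step requires each component of \(\widehat\C\setminus K'\) to meet \(\{0,\infty\}\), which is exactly the topological bookkeeping done above.

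As an alternative for this step, one could use the Laurent-coefficient truncation of Lemma~\ref{lem:laurent}, applied to an \(\OO(\C^*)\)-function obtained by Runge. I expect the direct Runge statement to be simpler.
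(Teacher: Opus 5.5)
Your proof of the lemma is correct. The ``only if'' direction uses only your maximum-modulus inclusion, which holds for every \(K\). In the ``if'' direction every bounded component of \(\C\setminus K\) contains \(0\), and that is exactly the case in which your Runge bookkeeping works.

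The general hull description you claim in Step~1 is not established by your argument, though. When you add \(w\), a bounded component \(D\) of \(\C\setminus K\) with \(0\notin D\) is left untouched. It therefore stays a component of \(\widehat{\C}\setminus K'\) that meets neither \(0\) nor \(\infty\). For example, take \(K=\{|z-3|=1\}\cup\{|z+3|=1\}\) and \(w=10\). In that situation Runge with poles in \(\{0,\infty\}\) does not apply. For the full statement, run the same argument with \(K\) replaced by \(\widetilde K\), the union of \(K\) with all bounded components that omit \(0\). This set is compact, since its complement is a union of components of \(\C\setminus K\) and hence open. It also lies in \(\C^*\) and does not contain \(w\), so your Runge step goes through for \(\widetilde K\cup\{w\}\).

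Your route differs from the paper's. The paper quotes the Runge criterion on the open Riemann surface \(\C\setminus\{a\}\): a compact set is holomorphically convex iff its complement has no relatively compact component. It then reduces the lemma to topology. It checks that, after puncturing, the component \(D_a\ni a\) stays connected and is not relatively compact, while every other bounded component becomes relatively compact.

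You instead split the equivalence into two tools: maximum modulus for necessity, and the classical planar Runge theorem with poles at \(\{0,\infty\}\) for sufficiency. The same point-removal connectedness fact appears in both proofs, applied to \(w\) in yours and to \(a\) in the paper's.

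Your route is more elementary and produces explicit separating Laurent polynomials, which fits the Laurent machinery used elsewhere in the paper. The paper's route is shorter given the cited criterion and handles both directions through a single characterization.
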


\begin{proof}
Translation identifies \(\C\setminus\{a\}\) with \(\C^*\), so the assertion is equivalent to the corresponding holomorphic convexity of \(K\) in the punctured plane. By the Runge criterion for open Riemann surfaces \cite[\S25]{Forster81}, this holds exactly when \((\C\setminus\{a\})\setminus K\) has no relatively compact connected component in \(\C\setminus\{a\}\).

Let \(D_a\) be the component of \(\C\setminus K\) containing \(a\). Every other component remains a component after puncturing. The set \(D_a\setminus\{a\}\) is also connected: an open connected subset of \(\C\) is polygonally connected, and any polygonal path meeting \(a\) can be modified inside a small disk about \(a\), contained in \(D_a\), to avoid its center. Thus the components after puncturing are precisely \(D_a\setminus\{a\}\) and the original components distinct from \(D_a\).

If a bounded component \(D\neq D_a\) occurs, its closure is compact in \(\C\) and avoids \(a\). Indeed, a disk about \(a\) contained in \(D_a\) is disjoint from \(D\) and hence separates \(a\) from \(\overline D\). Thus \(D\) is relatively compact in the punctured plane. Conversely, every unbounded component has noncompact closure in the punctured plane, and \(D_a\setminus\{a\}\) is not relatively compact there, since its closure in \(\C\) contains the missing point \(a\). These observations show that a relatively compact component after puncturing exists exactly when a bounded component of \(\C\setminus K\) does not contain \(a\). The Runge criterion gives the equivalence.
\end{proof}

The following proposition applies this convexity condition on finitely many shifted logarithmic covers.

\begin{proposition}\label{prop:product-localization}
For \(n\geq2\), let \(K_1,\ldots,K_n\subset\C\) be nonempty compact sets. Suppose that, for every \(j\), there are two distinct points \(a_j^0,a_j^1\in\C\setminus K_j\) such that
\[
K_j-a_j^\nu\subset\C^*\quad\text{is }\OO(\C^*)\text{-convex},\qquad \nu\in\{0,1\}.
\]
Then \(M=\C^n\setminus(K_1\times\cdots\times K_n)\) is Oka.
\end{proposition}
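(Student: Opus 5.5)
The plan is to apply the localization theorem (Theorem~\ref{thm:localization}). Cover \(M\) by finitely many Zariski open subsets, each of which is a logarithmic quotient of a periodic complement. For each choice \(\nu=(\nu_1,\ldots,\nu_n)\in\{0,1\}^n\), set
\[
M_\nu\coloneqq\{x\in M:x_j\neq a_j^{\nu_j},\ j=1,\ldots,n\}.
\]
This is the complement in \(M\) of the hypersurface \(\bigcup_j\{x_j=a_j^{\nu_j}\}\), so it is Zariski open in \(M\). The sets \(M_\nu\) cover \(M\): given \(x\in M\), for each \(j\) the point \(x_j\) differs from at least one of the two distinct points \(a_j^0,a_j^1\), and we choose \(\nu_j\) accordingly. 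It therefore suffices to show that every \(M_\nu\) is Oka.

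Fix \(\nu\). The affine change of coordinates \(w_j=x_j-a_j^{\nu_j}\) identifies \(M_\nu\) with \(X\setminus C\), where \(X=(\C^*)^n\) and
\[
C\coloneqq(K_1-a_1^{\nu_1})\times\cdots\times(K_n-a_n^{\nu_n})\subset X.
\]
Each factor is compact in \(\C^*\), since \(a_j^{\nu_j}\notin K_j\). By hypothesis each factor is \(\OO(\C^*)\)-convex. A product of holomorphically convex compact sets in Stein manifolds is holomorphically convex in the product. I would verify this directly. If \(w\notin C\), some coordinate satisfies \(w_j\notin K_j-a_j^{\nu_j}\), so there is \(h\in\OO(\C^*)\) with \(|h(w_j)|>\sup_{K_j-a_j^{\nu_j}}|h|\). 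The function \(h\circ\operatorname{pr}_j\) lies in \(\OO(X)\) and separates \(w\) from \(C\). Hence \(C\) is compact and \(\OO(X)\)-convex, and it is nonempty because every \(K_j\) is nonempty.

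Corollary~\ref{cor:quotient-complement} now shows that \(X\setminus C\), and hence \(M_\nu\), is Oka. Theorem~\ref{thm:localization} then gives the Oka property of \(M\).

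The only point needing care is that Zariski open is meant relative to \(M\). Each \(M_\nu\) is the complement in \(M\) of a closed analytic subvariety of \(M\), namely the trace on \(M\) of coordinate hyperplanes, so this is the notion of Zariski openness that Theorem~\ref{thm:localization} uses. The argument has no substantial obstacle beyond checking the product convexity.
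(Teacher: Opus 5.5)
Your proposal is correct and follows the paper's proof. It uses the same $2^n$ Zariski open sets cut out by the coordinate hyperplanes through the chosen punctures, the same coordinatewise separation argument showing that $\prod_j(K_j-a_j)$ is $\OO(X)$-convex, and Theorem~\ref{thm:localization} at the end. The only difference is packaging: the paper re-derives dominating sprays on $M_a$ through the shifted exponential covering $\pi_a$, whereas you identify $M_a$ with $X\setminus C_a$ by translation and cite Corollary~\ref{cor:quotient-complement}, whose proof is exactly that descent argument.
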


\begin{proof}
Fix a choice \(a=(a_1,\ldots,a_n)\), where \(a_j\in\{a_j^0,a_j^1\}\), and define
\begin{align*}
C_{j,a_j}&=K_j-a_j\subset\C^*,
& E_{j,a_j}&=\{u\in\C:a_j+e^{\ii u}\in K_j\},\\
C_a&=\prod_{j=1}^n C_{j,a_j},
& E_a&=\prod_{j=1}^n E_{j,a_j}=q^{-1}(C_a),\\
&&Y_a&=\C^n\setminus E_a.
\end{align*}

\smallskip
\noindent\textbf{Step 1: Holomorphic convexity on the quotient.}
The set \(C_a\) is nonempty and compact in \(X\): each factor \(K_j-a_j\) is nonempty and compact, and lies in \(\C^*\) because \(a_j\notin K_j\). It is also \(\OO(X)\)-convex. Indeed, if \(w\notin C_a\), some coordinate \(w_j\) does not belong to \(C_{j,a_j}\). A holomorphic function on \(\C^*\) separating \(w_j\) from that factor, viewed as a function of the \(j\)-th coordinate on \(X\), separates \(w\) from \(C_a\).
By Lemma~\ref{lem:periodic-connectedness}, \(Y_a\) is path connected. Theorem~\ref{thm:periodic-complement} applies to \(C_a\): the manifold \(Y_a\) is Oka and has the normalized families asserted there.

\smallskip
\noindent\textbf{Step 2: Descent of the sprays to a Zariski open subset.}
Define
\begin{equation}\label{eq:general-product-patch}
M_a=M\setminus\bigcup_{j=1}^n\{x_j=a_j\}.
\end{equation}
The shifted exponential map
\begin{equation}\label{eq:shifted-product-cover}
\pi_a:Y_a\longrightarrow M_a,
\qquad \pi_a(z)=(a_1+e^{\ii z_1},\ldots,a_n+e^{\ii z_n}),
\end{equation}
is a holomorphic covering: it is the restriction of the product exponential covering onto \(\prod_j(\C\setminus\{a_j\})\), and the inverse image of \(K_1\times\cdots\times K_n\) under that covering is exactly \(E_a\).

Let \(K\subset\C^m\) be compact and convex and \(g\) be holomorphic from a neighborhood of \(K\) to \(M_a\). Shrink that neighborhood to a convex open set \(U\) containing \(K\). The nonvanishing holomorphic functions \(g_j-a_j\) admit holomorphic logarithms \(\ell_j\) on the simply connected set \(U\). Thus
\[
f=(\ell_1/\ii,\ldots,\ell_n/\ii):U\longrightarrow Y_a
\]
is a holomorphic lift of \(g\). Theorem~\ref{thm:periodic-complement} gives a neighborhood \(V\subset U\) of \(K\) and a holomorphic map \(F:V\times\C^n\to Y_a\) with \(F(\zeta,0)=f(\zeta)\) and \(D_\xi F(\zeta,0)=I_n\). Then
\[
G=\pi_a\circ F:V\times\C^n\longrightarrow M_a
\]
satisfies \(G(\zeta,0)=g(\zeta)\), and
\[
D_\xi G(\zeta,0)=D\pi_a(f(\zeta))
=\operatorname{diag}\bigl(\ii e^{\ii f_1(\zeta)},\ldots,\ii e^{\ii f_n(\zeta)}\bigr)
\]
is invertible. Theorem~\ref{thm:kusakabe} proves that \(M_a\) is Oka.

\smallskip
\noindent\textbf{Step 3: Construction of the finite Zariski localization cover.}
The subset \(M_a\) is Zariski open in \(M\), since its complement is the zero set in \(M\) of the holomorphic polynomial \(\prod_{j=1}^n(x_j-a_j)\). Moreover,
\begin{equation}\label{eq:finite-product-cover}
M=\bigcup_{a\in\prod_{j=1}^n\{a_j^0,a_j^1\}} M_a.
\end{equation}
To verify the cover, fix \(x\in M\). Since \(a_j^0\neq a_j^1\), for each \(j\) at least one of these two numbers differs from \(x_j\). Choosing such an \(a_j\) for every \(j\) gives \(x\in M_a\). There are \(2^n\) choices of \(a\).
Each \(M_a\) is connected as the continuous image of the path-connected set \(Y_a\). They also have a common point: choose every coordinate sufficiently large in modulus to be outside its compact factor and different from the finitely many chosen centers. Hence their union \(M\) is connected. Applying Theorem~\ref{thm:localization} to \eqref{eq:finite-product-cover} proves that \(M\) is Oka.
\end{proof}

\begin{proof}[Proof of Theorem~\ref{thm:main}\,(ii)]
If one factor is empty, the product is empty and the complement is \(\C^n\), which is Oka. Assume that all factors are nonempty. For each \(j\), make the following choice. If \(\C\setminus K_j\) has no bounded component, choose any two distinct points \(a_j^0,a_j^1\in\C\setminus K_j\). If it has one bounded component \(D_j\), choose two distinct points \(a_j^0,a_j^1\in D_j\); these exist because \(D_j\) is nonempty and open. In either case, each chosen point belongs to every bounded component of \(\C\setminus K_j\), with this condition vacuous in the first case. Lemma~\ref{lem:punctured-convexity} shows that \(K_j-a_j^\nu\) is \(\OO(\C^*)\)-convex for \(\nu=0,1\). All the hypotheses of Proposition~\ref{prop:product-localization} are satisfied, and it proves assertion~(ii).
\end{proof}

If a factor has two distinct bounded complementary components, no single puncture belongs to both, so Lemma~\ref{lem:punctured-convexity} shows that the present localization construction does not apply.

For \(n\geq2\) and \(0<r_j\leq R_j<\infty\), \(j=1,\ldots,n\), consider the closed annuli
\[
K_j=\{x\in\C:r_j\leq|x|\leq R_j\}.
\]
The complement of \(K_j\) consists of the disk \(\{|x|<r_j\}\) and the exterior \(\{|x|>R_j\}\), exactly one of which is bounded. This remains true when \(r_j=R_j\). Theorem~\ref{thm:main}\,(ii) therefore gives the Oka property of \(\C^n\setminus A_{\boldsymbol r,\boldsymbol R}\) for all the indicated radii. Taking \(r_j=R_j=1\) for every \(j\) gives the Oka property of \(\C^n\setminus T\).

For \(n\geq2\), arbitrary centers \(c_1,\ldots,c_n\in\C\), and positive radii \(r_1,\ldots,r_n\), the complement of
\[
\{x\in\C^n:|x_j-c_j|=r_j\text{ for }j=1,\ldots,n\}
\]
is Oka. Indeed, the affine automorphism
\[
(x_1,\ldots,x_n)\longmapsto
\left(\frac{x_1-c_1}{r_1},\ldots,\frac{x_n-c_n}{r_n}\right)
\]
maps this product of circles onto \(T\).

\section*{Acknowledgments}

We thank Professor Franc Forstneri\v c for bringing to our attention the question of whether the product of the unit circles in the coordinate planes of \(\C^2\) has an Oka complement. He also observed that our method yields the Oka property of \(\C^2\setminus T_\delta\) for every \(\delta\geq0\).

\section*{Funding}

S.-Y.~Xie was partially supported by the National Key R\&D Program of China (grant nos.~2023YFA1010500 and 2021YFA1003100), the National Natural Science Foundation of China (grant nos.~12288201 and 12471081), and the Xiaomi Young Talents Program.

\section*{AI use statement}

The mathematical insights and ideas are due to the authors. AI tools assisted in drafting the manuscript. The authors independently checked the entire manuscript and take full responsibility for its content.

\end{document}